\DeclareFontFamily{OT1}{pzc}{}
\DeclareFontShape{OT1}{pzc}{m}{it}{<-> s * [1.10] pzcmi7t}{}
\DeclareMathAlphabet{\mathpzc}{OT1}{pzc}{m}{it}
\let\originalleft\left
\let\originalright\right
\renewcommand{\left}{\mathopen{}\mathclose\bgroup\originalleft}
\renewcommand{\right}{\aftergroup\egroup\originalright}
\begin{document}

\newcommand{\ee}{\varepsilon}

\newcommand{\removableFootnote}[1]{\footnote{#1}}

\newtheorem{theorem}{Theorem}[section]
\newtheorem{corollary}[theorem]{Corollary}
\newtheorem{lemma}[theorem]{Lemma}
\newtheorem{proposition}[theorem]{Proposition}

\theoremstyle{definition}
\newtheorem{definition}{Definition}[section]
\newtheorem{example}[definition]{Example}

\theoremstyle{remark}
\newtheorem{remark}{Remark}[section]

\title{
Normal forms, differentiable conjugacies and elementary bifurcations of maps.
}
\author[$\dagger$]{P.A.~Glendinning}
\author[$\ddagger$]{D.J.W.~Simpson}
\affil[$\dagger$]{Department of Mathematics, University of Manchester, Manchester, UK}
\affil[$\ddagger$]{School of Mathematical and Computational Sciences, Massey University, Palmerston North, New Zealand}

\maketitle



\begin{abstract}
We strengthen the standard bifurcation theorems for saddle-node, transcritical, pitchfork, and period-doubling bifurcations of maps. Our new formulation involves adding one or two extra terms to the standard truncated normal forms with coefficients determined by algebraic equations. These extended normal forms are differentiably conjugate to the original maps on basins of attraction and repulsion of fixed points or periodic orbits. This reflects common assumptions about the additional information in normal forms despite standard bifurcation theorems being formulated only in terms of topological equivalence. 
\end{abstract}

\section{Introduction}
\label{sect:intro}
\setcounter{equation}{0}

In most textbooks bifurcation theorems contain two parts: a \textbf{skeleton} in which the existence of particular solutions (e.g.~fixed points or periodic orbits) is established as a function of parameters, and a local \textbf{equivalence} in which the dynamics away from the skeleton is described. The local equivalence is topological, but in this paper we show that it can be made differentiably conjugate to simple polynomial normal forms.

The skeleton is usually established using the Implicit Function Theorem \cite{Devaney, Iooss, Kuznetsov, Wiggins}, although versal deformations of singularities can also provide this information \cite{Montaldi}, as can local asymptotic expansions \cite{Glendinning}. The local equivalence often relates the dynamics to a `typical' simple example. This can either be by a rigorous change of coordinates to a simple normal form \cite{Takens1973}, or to a truncated version of the normal form \cite{G&H, Kuznetsov} for which it is claimed that the local dynamics is topologically equivalent to the general system being studied.

For the four simplest local bifurcations of one-dimensional maps, the truncated normal forms used to describe these bifurcations are listed in the second column of Table~\ref{table:1}. This table also lists our modifications to the normal forms introduced below. For multi-dimensional maps the normal forms can be obtained by first reducing to a one-dimensional centre manifold.

\begin{table}
\centering
\begin{tabular}{|| c ||l l ||}
\hline\hline
\textrm{Bifurcation} & \textrm{Standard normal form} & \textrm{Additional terms}\\
\hline\hline ~&~&~\\[-3.7mm]									
\textrm{saddle-node} & $x_{n+1}=x_n+\nu -x_n^2$ & $+\,ax_n^3$\\[.3mm]    
\hline ~&~&~\\[-3.7mm]
\textrm{transcritical} & $x_{n+1}=x_n+x_n(\nu -x_n)$ & $+\,ax_n^3$\\[.3mm]
\hline ~&~&~\\[-3.7mm]
\textrm{pitchfork} & $x_{n+1}=x_n+x_n(\nu -x^2_n)$ & $+\,a x_n^5 + b \nu x_n^2$\\[.3mm]
\hline ~&~&~\\[-3.7mm]
\textrm{period-doubling} & $x_{n+1}=-x_n - x_n(\nu - x_n^2)$ \hspace{2mm}   
& $+\,a x_n^5$\\[.3mm]
\hline\hline
\end{tabular}
\caption{
Standard normal forms and additional terms that make it possible to create differentiable conjugacies.
}
\label{table:1}
\end{table}

The relationship between the two elements of the analysis, change of coordinates and topological equivalence, is often unclear.
If only topological equivalence is required, then the continuous parameter $\nu$ of the truncated normal form of the
saddle-node bifurcation, for example, could be replaced by the sign of $\nu$, a discrete parameter (see Theorem~\ref{thm:mono}).
Since a topological conjugacy does not depend on the existence of derivatives, a family of piecewise-affine maps could also be used.

Despite this, many textbooks take great care to show that the truncated normal forms of Table~\ref{table:1} can be obtained by smooth changes of coordinates to leading order, e.g.~\cite{Kuznetsov}. Clearly there is an unwritten assumption that the truncated normal forms carry more information than the topological
conjugacies stated in the theorems, and indeed the skeleton arguments carry information about the parameter dependence of some solutions without recourse to the normal form. But in that case why introduce the normal forms and changes of coordinates as anything other than simple examples?

In this paper we will show that the normal forms, modified as in Table~\ref{table:1}, do indeed carry more information. More explicitly, there are local diffeomorphisms between parts of the dynamics of the general map and the corresponding parts of the dynamics of the extended normal form. We also provide equations connecting the new parameterisation to the parameter dependent coefficients of the new terms. Our proof of local differentiable equivalence does not involve actually creating the coordinate changes, so none of the technical issues regarding the convergence of infinitely many successive coordinate changes will be needed. 

The extra information used in our analysis is the multiplier (or stability coefficient) of a periodic orbit.
The multiplier of a fixed point is the derivative of the map evaluated at the fixed point.
For a periodic orbit the multiplier is the product of the derivative evaluated at the points on the orbit.
Our method is to equate the multipliers of the fixed points of the normal form to those of the original map
and the additional terms in Table~\ref{table:1} allow us to do this.
Then we use local linearization theorems \cite{Belitskii1986,Ofarrell2009,Ofarrell2011,Sternberg} to show 
the existence of differentiable conjugacies on the basins of attraction and
basins of repulsion of the fixed points and periodic orbits of the skeletons.
This extends the standard two step analysis (skeleton and topological equivalence) to the following four steps
(once the reduction to a one-dimensional centre manifold has been made):
\begin{itemize}
\item a \textbf{skeleton} in which the existence of particular solutions is established as a function of parameters;
\item a \textbf{normal form skeleton} in which the existence of particular solutions
is established as a function of parameters of the normal form;
\item \textbf{multiplier equivalence} in which it is shown that the multipliers of the skeleton are equal
to the multipliers of corresponding solutions of the normal form skeleton for appropriate values of coefficients; and
\item \textbf{local differentiable conjugacy} in which the dynamics away from the skeleton is described
via one or more differentiable conjugacies.
\end{itemize}

In the remainder of this paper we go through the standard bifurcation theorems in this stronger format.
In \S\ref{sect:basics} we give the definitions and sketch the conjugacy theorems needed to develop this approach.
In \S\ref{sect:results} we give the strengthened bifurcation theorems in full;
the subsequent four sections go through the four fundamental cases. 
This starts with the transcritical bifurcation (\S\ref{sect:transcritical})
as it is in many ways the easiest to handle, and we do this without full rigour to show how the method works.
As in most expositions we do not specify neighbourhoods as we go through the argument.
A more detailed account is given in \S\ref{sect:saddlenode} where we treat the saddle-node bifurcation.
Paradoxically it is the case with no fixed points, the easiest to work with on the whole real line,
which presents the greatest challenge when restricted to a fixed interval in space with varying parameter.
We then sketch the equivalent approach for the pitchfork bifurcation and period-doubling bifurcation in
\S\ref{sect:pitchfork} and \S\ref{sect:periodDoubling}.
We conclude in \S\ref{sect:conc}
with a short summary of how our new approach fits in with other bifurcations.
For each bifurcation there is a tension between having the strongest dynamic equivalence
given the types of dynamics that are generated and keeping the analysis as simple as possible.
We argue that the approach here via differentiable conjugacy is optimal for the elementary bifurcations,
but that it is not so appropriate in more complicated bifurcations.

\section{Topological versus differentiable conjugacies}
\label{sect:basics}
\setcounter{equation}{0}

Near local bifurcations maps are monotonic on a neighbourhood of the bifurcation point, increasing 
for the saddle-node, transcritical and pitchfork bifurcations and decreasing for the period-doubling bidurcation.
In this section we bring together the technical results needed to prove the new bifurcation theorems stated in \S\ref{sect:results}.


\begin{definition}
Maps $f:U\to \mathbb{R}$ and $g:V\to \mathbb{R}$ are {\em topologically conjugate}
if there exists a homeomorphism (continuous bijection with continuous inverse) $h:U\to V$
such that $h \circ f=g\circ h$.
If $h$ is a diffeomorphism then $f$ and $g$ are {\em differentiably conjugate}.
If $h$ is of class $C^r$, $r \ge 1$, then $f$ and $g$ are $C^r$-\emph{conjugate}.
\label{df:conjugate}
\end{definition}

\medskip
If $x$ is a fixed point of $f$ then $y=h(x)$ is a fixed point of $g$
and referred to as the corresponding fixed point of $g$.
If $h$ is differentiable then $x$ and $y$ have the same multiplier: $f^\prime (x)=g^\prime (y)$.
The conjugacy $h$ is essentially a coordinate transformation, so if $y_n=h(x_n)$ then 
\[
y_{n+1}=h(x_{n+1})=h(f(x_n))=h(f(h^{-1}(y_n))), \qquad \textrm{i.e.}~ g=h\circ f \circ h^{-1}.
\]

The following result shows just how weak the condition of topological conjugacy is for increasing maps of the real line. We are not sure to whom this result should be attributed, but it is part of the folklore of the subject.

\medskip
\begin{theorem}
Suppose $f:\mathbb{R}\to \mathbb{R}$ and $g:\mathbb{R}\to \mathbb{R}$ are
increasing homeomorphisms with precisely $n\ge 0$ fixed points, $x_k$ and $y_k$ respectively for $k=1,\dots ,n$, with
$x_0=-\infty <x_1 <\dots < x_n<\infty =x_{n+1}$ and  $y_0=-\infty <y_1 <\dots < y_n<\infty =y_{n+1}$.
Then $f$ and $g$ are topologically conjugate by an increasing
homeomorphism if and only if
the sign of $f(x)-x$ on $(x_k,x_{k+1})$ and the sign of $g(y)-y$ on $(y_k,y_{k+1})$ is equal for each $k=0,\dots ,n$.
\label{thm:mono}
\end{theorem}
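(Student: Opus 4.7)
The plan is to split the proof into the straightforward necessity direction and the constructive sufficiency direction, with the bulk of the work going into the latter via a fundamental-domain construction.

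For necessity, suppose $h$ is an increasing topological conjugacy. Since $h$ is an increasing bijection of $\mathbb{R}$ and sends fixed points of $f$ bijectively to fixed points of $g$, it must send them in order, so $h(x_k)=y_k$. For any $x\in(x_k,x_{k+1})$, monotonicity gives $h(x)\in(y_k,y_{k+1})$, and applying $h$ to $f(x)-x$ preserves sign because $h$ is increasing; combined with $h(f(x))=g(h(x))$ this yields $\mathrm{sign}(g(h(x))-h(x))=\mathrm{sign}(f(x)-x)$, as required. Since $h$ restricted to $(x_k,x_{k+1})$ is an increasing homeomorphism onto $(y_k,y_{k+1})$, this extends to all of each open interval.

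For sufficiency, I will build $h$ piecewise. First set $h(x_k)=y_k$ for $k=1,\ldots,n$. Then on each interval $(x_k,x_{k+1})$ (including the unbounded ones when $k=0$ or $k=n$) I construct $h$ using a fundamental domain. Consider such an interval and assume $f(x)>x$ there (the decreasing case is symmetric). Since $f$ has no fixed points in the interval and is an increasing homeomorphism, for any $p\in(x_k,x_{k+1})$ the sequence $\{f^m(p)\}_{m\in\mathbb{Z}}$ is strictly increasing, bounded above by $x_{k+1}$ and below by $x_k$ (with the obvious interpretation at $\pm\infty$), and the monotone limits must be fixed points of $f$, hence equal to $x_{k+1}$ and $x_k$ respectively. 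Therefore the half-open intervals $[f^m(p),f^{m+1}(p))$ tile $(x_k,x_{k+1})$. The hypothesis on signs ensures $g(y)>y$ on the corresponding interval $(y_k,y_{k+1})$, so the same tiling is available for $g$ starting from any $q\in(y_k,y_{k+1})$. I then choose any increasing homeomorphism $h_0:[p,f(p)]\to[q,g(q)]$ with $h_0(p)=q$, $h_0(f(p))=g(q)$, and extend by the conjugacy rule
\[
h(f^m(x))=g^m(h_0(x)), \qquad x\in[p,f(p)],\ m\in\mathbb{Z}.
\]
This gives a well-defined increasing homeomorphism of $(x_k,x_{k+1})$ onto $(y_k,y_{k+1})$ that intertwines $f$ and $g$.

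Finally, I would glue these pieces together at the fixed points and verify continuity and the conjugacy relation globally. Continuity at each interior $x_k$ follows because the tiling iterates on both sides accumulate at $x_k$ on the $f$-side and at $y_k$ on the $g$-side, so $h(f^m(p))=g^m(q)$ converges to $y_k=h(x_k)$ as $m\to-\infty$ (and symmetrically from the left-hand interval). The same argument handles the behaviour at $\pm\infty$. The relation $h\circ f=g\circ h$ holds on each open interval by construction and trivially at the fixed points, and $h$ is an increasing bijection of $\mathbb{R}$ as a concatenation of compatible increasing bijections. The main technical point — and the step I expect to need the most care — is verifying that the tiles $\{[f^m(p),f^{m+1}(p))\}$ really exhaust the interval and accumulate at the correct endpoints; this rests entirely on $f$ being an increasing homeomorphism with no interior fixed points and the sign of $f(x)-x$ being constant on the interval, the latter of which is exactly the hypothesis of the theorem.
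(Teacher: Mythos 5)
Your proposal is correct and follows essentially the same route as the paper: both prove necessity by noting an increasing conjugacy must match fixed points in order and preserve the sign of $f(x)-x$, and both prove sufficiency by picking a point in each complementary interval, using the interval between it and its image as a fundamental domain (the paper takes an affine map there, you allow any increasing homeomorphism with matched endpoints), and extending by $h\circ f = g\circ h$ forwards and backwards so that the orbit tiles accumulate on the fixed points. The only cosmetic difference is your explicit half-open tiling language versus the paper's inductive push-forward on closed intervals; the substance is identical.
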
 

Here we prove Theorem \ref{thm:mono} by constructing $h$ on each $(x_k,x_{k+1})$ so that it maps an orbit of $f$
on this interval to an orbit of $g$ on $(y_k,y_{k+1})$, as in Fig.~\ref{fig:a}.

\begin{figure}[t!]
\begin{center}
\includegraphics[height=5.5cm]{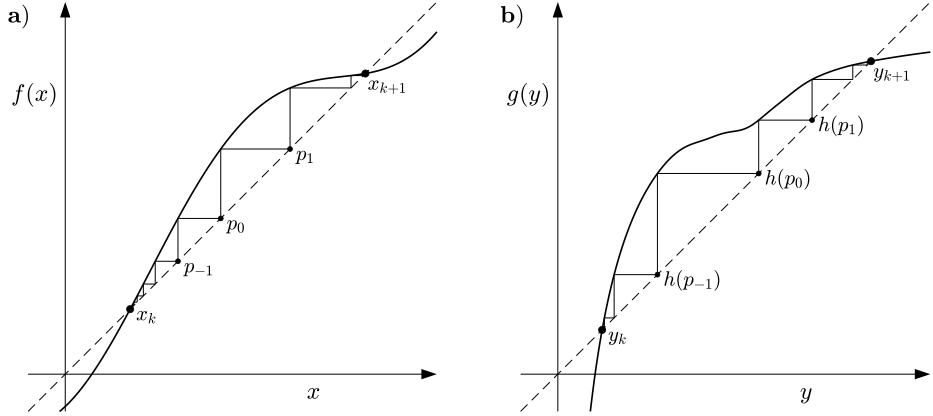}
\caption{
A sketch of the increasing maps $f$ and $g$ of Theorem \ref{thm:mono}.
As indicated $h$ is constructed so that it maps an orbit of $f$ to an orbit of $g$.
\label{fig:a}
}
\end{center}
\end{figure}

\medskip
\begin{proof}
First suppose $h \circ f = g \circ h$ for an increasing homeomorphism $h$.
Then each fixed point of $f$ maps to some fixed point of $g$.
Further $h(x_k) = y_k$ for each $k$ because $h$ is increasing.
Thus $h$ maps $(x_k,x_{k+1})$ to $(y_k,y_{k+1})$ for each $k = 0,\dots,n$.
Now choose any $k = 0,\dots,n$ and $x \in (x_k,x_{k+1})$.
The sign of $f(x) - x$ is the same as the sign of $h(f(x)) - h(x)$ because $h$ is increasing.
By $h \circ f = g \circ h$ this is the sign of $g(y) - y$ where $y = h(x) \in (y_k,y_{k+1})$.
Thus $f(x) - x$ and $g(y) - y$ have the same sign as required.

Conversely suppose $f(x) - x$ on $(x_k,x_{k+1})$ has the same sign as $g(y) - y$ on $(y_k,y_{k+1})$ for each $k$. 
Define $h$ on the fixed points $\{ x_k \}_1^n$
by $h(x_k) = y_k$.
For each $k \in \{0,\dots ,n\}$ choose some $p_0 \in (x_k,x_{k+1})$ and $q_0 \in (y_k,y_{k+1})$ and let
\[
p_i = f^i(p_0), \qquad q_i = g^i(q_0),
\]
for all $i \in \mathbb{Z}$.
Suppose $f(x) - x > 0$ on $(x_k,x_{k+1})$.
Then $\{ p_i \}$ and $\{ q_i \}$
are strictly increasing sequences and 
\[
\lim_{i \to -\infty} p_i = x_k, \qquad
\lim_{i \to \infty} p_i = x_{k+1}, \qquad
\lim_{i \to -\infty} q_i = y_k, \qquad
\lim_{i \to \infty} q_i = y_{k+1} \,,
\]
as in Fig.~\ref{fig:a}.
(If $f(x) - x$ and $g(y) - y$ are negative the sequences are decreasing and the limits are switched.)
Now define $h:[p_0,p_1] \to [q_0,q_1]$
using any continuous strictly increasing function, say 
\[
h(x)= q_0+\frac{q_1-q_0}{p_1-p_0}(x-p_0).
\]
Then define $h$ inductively for $x\in [p_i, p_{i+1}]$, $i\ge 1$ by
\[
h(x)=\left( g\circ h \circ f^{-1} \right)\!(x),
 \]
and for $x\in [p_{-(i+1)}, p_{-i}], i\ge 0$ by
\[
h(x)=\left( g^{-1}\circ h \circ f \right)\!(x).
 \]
By construction this is continuous on the images and preimages of $p_0$,
continuous and strictly increasing at all other points, and $g\circ h = h\circ f$.
\end{proof}

The remarks in \S\ref{sect:intro} about the inadequacies of topological conjugation stem from this theorem.
Stronger results of course need more conditions.
These are supplied by an extension of Sternberg's linearization result \cite{Sternberg} due to Belitskii \cite{Belitskii1986}.

\begin{theorem}(Belitskii) Suppose $f:\mathbb{R}\to \mathbb{R}$ and $g:\mathbb{R}\to \mathbb{R}$ are
strictly increasing $C^r$ diffeomorphisms, $r\ge 1$,
and both maps have precisely $n\ge 1$ fixed points, $x_k$ and $y_k$ respectively for $k=1,\dots ,n$, with $x_0=-\infty <x_1 <\dots < x_n<\infty =x_{n+1}$ and  $y_0=-\infty <y_1 <\dots < y_n<\infty =y_{n+1}$. Suppose in addition that 
\[
f^\prime (x_k)=g^\prime (y_k) \ne 1, \qquad \text{for all}~ k=1, \dots ,n.
\]
Then $f$ restricted to $(x_{k-1},x_{k+1})$ and $g$ restricted to $(y_{k-1},y_{k+1})$ are $C^{r-1}$-conjugate
for each $k=1, \dots ,n$.
\label{thm:difffp}
\end{theorem}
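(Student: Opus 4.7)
The plan is to combine a Sternberg-type local linearization at each fixed point with a global extension by the dynamics. Because $f$ is an increasing $C^r$ diffeomorphism of $\mathbb{R}$, at each fixed point $x_k$ the multiplier $\lambda_k = f'(x_k)$ is strictly positive, and by hypothesis $\lambda_k \neq 1$. In one dimension the only possible resonance condition $\lambda = \lambda^m$ for $m \geq 2$ forces $\lambda^{m-1} = 1$, which is impossible for $\lambda > 0$ with $\lambda \neq 1$. Sternberg's theorem (as cited in the paper) therefore furnishes $C^{r-1}$ diffeomorphisms $\varphi_k$ and $\psi_k$, defined on neighborhoods $U$ of $x_k$ and $V$ of $y_k$, that conjugate $f|_U$ and $g|_V$ to the same linear model $z \mapsto \lambda_k z$. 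The composition $h_k := \psi_k^{-1} \circ \varphi_k$ is then a local $C^{r-1}$ conjugacy between $f$ and $g$ near $x_k$, and this is precisely the step that exploits the hypothesis $f'(x_k) = g'(y_k)$.

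Next I would propagate $h_k$ across the interval $(x_{k-1}, x_{k+1})$ using the dynamics. Consider first the attracting case $0 < \lambda_k < 1$. Monotonicity of $f$ together with the absence of other fixed points implies that on each half-interval $(x_{k-1}, x_k)$ and $(x_k, x_{k+1})$ the sign of $f(x)-x$ is constant and every orbit converges monotonically to $x_k$, even when $x_{k-1}$ or $x_{k+1}$ is infinite (bounded monotone sequences converge, and the limit must be a fixed point). So for every $x \in (x_{k-1}, x_{k+1})$ there is some $N = N(x) \geq 0$ with $f^N(x) \in U$, and I would define
\[
h(x) \,=\, g^{-N}\bigl( h_k(f^N(x)) \bigr).
\]
The local relation $h_k \circ f = g \circ h_k$ on $U$ makes this independent of the choice of $N$, and since on suitable subintervals $h$ is literally a composition of $C^{r-1}$ diffeomorphisms, the extension is globally $C^{r-1}$ and satisfies $h \circ f = g \circ h$. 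Strict monotonicity of $h_k$, $f$, and $g$ gives strict monotonicity of $h$, and the images of the two half-intervals are $(y_{k-1}, y_k)$ and $(y_k, y_{k+1})$, because the analogous dynamics of $g$ carries $h_k(f^N(x))$ arbitrarily close to $y_{k\pm 1}$ as $N \to \infty$. The repelling case $\lambda_k > 1$ is reduced to the attracting one by applying the same construction to the increasing $C^r$ diffeomorphisms $f^{-1}$ and $g^{-1}$, whose common multiplier at $x_k$ and $y_k$ is $1/\lambda_k \in (0,1)$.

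The main obstacle lies in the invocation of Sternberg's theorem: this is where both the loss of one derivative and all of the genuine smoothness information are concentrated. Once the local conjugacy $h_k$ with matched linear part is in hand, the fundamental-domain extension is a routine construction whose only subtle points are verifying independence of the choice of $N$ and continuity of $h$ at the fixed point $x_k$ itself, both of which follow from the intertwining property of $h_k$ on $U$. A secondary point to watch is that Sternberg's theorem yields only a local linearization, so the global statement on $(x_{k-1}, x_{k+1})$ must really be obtained from the dynamical extension rather than directly from the linearization result.
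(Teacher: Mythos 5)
Your proposal is correct and follows essentially the same route as the paper's Appendix~\ref{app:Bel}: use Sternberg's local linearization at the hyperbolic fixed point (where the hypothesis $f'(x_k)=g'(y_k)\ne 1$ enters) to get a local conjugacy, then spread it over all of $(x_{k-1},x_{k+1})$ by pushing forward along the dynamics. The only cosmetic difference is that your formula $h = g^{-N}\circ h_k\circ f^{N}$ with independence of $N$ replaces the paper's step-by-step extension with its explicit one-sided derivative check at the gluing points, but this is the same fundamental-domain construction.
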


An outline of a proof of this result is given in Appendix \ref{app:Bel}.
If $f$ and $g$ have no fixed points then a minor variation of the construction in the proof of 
Theorem \ref{thm:mono} shows that there is a topological conjugacy which is smooth everywhere 
except at the `boundary' points $p_i$
(these are the points of an orbit of $f$).
Since there are no fixed points 
we cannot use Sternberg's result \cite{Sternberg} to generate intervals on which both $p_i$ and $f(p_i)$ or $f^{-1}(p_i)$ are contained in an open set on which they are both continuously differentiable.  It is possible to use a lemma due originally to Borel (see e.g.~\cite{Ofarrell2011}) based on formal power series and Taylor's theorem to prove that the conjugating function can be chosen to be continuously differentiable at these end points. Here we adopt a slightly different, and simpler, strategy.

\begin{theorem}
Suppose that $f:\mathbb{R}\to \mathbb{R}$ and $g:\mathbb{R}\to \mathbb{R}$ are increasing $C^r$
diffeomorphisms, $r\ge 1$, with no fixed points, and that both $f(x)-x$ and $g(x)-x$ have the same sign. 
Then there exists an increasing $C^{r}$-conjugacy $h:\mathbb{R}\to \mathbb{R}$ with $g\circ h = h\circ f$.
\label{thm:diffnofp}
\end{theorem}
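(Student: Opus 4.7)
The plan is to reduce, by reversing the sign of $x$ if necessary, to the case $f(x)>x$ and $g(x)>x$ for all $x\in\mathbb{R}$. Fix arbitrary $p_0,q_0\in\mathbb{R}$ and set $p_i=f^i(p_0)$ and $q_i=g^i(q_0)$ for $i\in\mathbb{Z}$. Since $f$ is an increasing diffeomorphism with $f(x)>x$ and no fixed points, the orbit $(p_i)$ is strictly increasing with $p_i\to+\infty$ and $p_{-i}\to-\infty$ (any bounded monotone orbit would limit to a fixed point). Hence the closed intervals $[p_i,p_{i+1}]$ tile $\mathbb{R}$, as do the $[q_i,q_{i+1}]$. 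The strategy is to prescribe $h$ on the fundamental domain $[p_0,p_1]$ by some $h_0:[p_0,p_1]\to[q_0,q_1]$ and extend to all of $\mathbb{R}$ by $h|_{[p_i,p_{i+1}]}=g^i\circ h_0\circ f^{-i}$. Any such $h$ is automatically continuous, strictly increasing (a composition of increasing maps), $C^r$ on each open interval $(p_i,p_{i+1})$, and satisfies $g\circ h=h\circ f$ by construction.

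The task therefore reduces to choosing $h_0$ so that $h$ is in fact $C^r$ at every boundary point $p_i$. By the conjugacy relation and the chain rule, $C^r$-smoothness at any one $p_i$ propagates to the others, so it suffices to match one-sided derivatives at a single point, say $p_1$. That is, I require $h_0^{(k)}(p_1)=(g\circ h_0\circ f^{-1})^{(k)}(p_1)$ for $k=1,\dots,r$. By Fa\`a di Bruno's formula, the right-hand side is a polynomial in $h_0^{(j)}(p_0)$, $f^{(j)}(p_0)$, and $g^{(j)}(q_0)$ for $j\le k$; at lowest order $h_0'(p_1)=g'(q_0)\,h_0'(p_0)/f'(p_0)$. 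Thus once I pick values $a_k:=h_0^{(k)}(p_0)$ (with $a_1>0$), the required boundary values $b_k:=h_0^{(k)}(p_1)$ are uniquely determined.

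The problem thus reduces to a one-dimensional interpolation: construct a strictly increasing $C^r$ function $h_0:[p_0,p_1]\to[q_0,q_1]$ with $h_0(p_0)=q_0$, $h_0(p_1)=q_1$, and prescribed derivative data $\{a_k\}$ at $p_0$ and $\{b_k\}$ at $p_1$. Existence of some $C^r$ function with such prescribed endpoint jets is standard, for instance by blending the Taylor polynomials $T_0(x)=\sum_{k=0}^r (a_k/k!)(x-p_0)^k$ and $T_1(x)=\sum_{k=0}^r (b_k/k!)(x-p_1)^k$ via cutoff functions supported near the endpoints with a smooth interpolant in the middle. The main obstacle I expect is enforcing strict monotonicity: the $b_k$ are coupled to the $a_k$ through the matching condition and cannot be varied independently, so a naive Hermite polynomial need not be monotone. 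The resolution is to exploit the free parameters $a_1>0,\,a_2,\dots,a_r$ to shape $h_0$ carefully—for example, taking $a_2=\cdots=a_r=0$ and using $T_0,T_1$ only on small neighbourhoods $[p_0,p_0+\eta]$ and $[p_1-\eta,p_1]$ where the positive first-derivative term ensures they are increasing, then joining them by an affine ramp on the middle subinterval. This should furnish the \emph{slightly different, and simpler, strategy} advertised in the text, bypassing Borel's lemma on formal power series.
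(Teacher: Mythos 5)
Your strategy is essentially the paper's: extend a seed function equivariantly over the fundamental domains $[p_i,p_{i+1}]$, check $C^r$ matching at a single junction, and let the conjugacy relation propagate smoothness to all other junctions. The paper sidesteps your Fa\`a di Bruno bookkeeping by seeding with $h=\mathrm{id}$ on a neighbourhood $U$ of a point chosen so small that the closures of the images of $U$ under the iterates (and of its $h$-image under $g$) are disjoint; the only thing left to build is then an increasing $C^r$ interpolant on the free gap between $U$ and $f(U)$, whose endpoint jets (identity on one side, $g\circ f^{-1}$ on the other) are fixed data, and this is done with a $C^\infty$ jump function. Your version, with the jets at $p_1$ induced from freely chosen jets at $p_0$, is the same reduction in different clothing.

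The one step of your write-up that does not work as stated is the final interpolation: concatenating $T_0$ on $[p_0,p_0+\eta]$, an affine ramp in the middle, and $T_1$ on $[p_1-\eta,p_1]$ is not $C^r$ (not even $C^1$) at the two interior junction points, and the naive blend $\phi T_0+(1-\phi)T_1$ need not be monotone because of the $\phi'(T_1-T_0)$ term. The repair is standard and close to what you intend: construct the \emph{derivative} first, i.e.\ choose a strictly positive $C^{r-1}$ function on $[p_0,p_1]$ agreeing with $T_0'$ near $p_0$ and with $T_1'$ near $p_1$ (possible on small enough neighbourhoods since $a_1,b_1>0$), adjusted in the interior so its integral is $q_1-q_0$, and integrate to get $h_0$; alternatively, shrink the seed domain as the paper does so that the interpolation takes place on an unconstrained gap and can be handled directly by a jump function. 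With that amendment your argument is complete and matches the paper's.
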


\begin{proof}
Suppose $f(x)-x>0$ so the images and preimages $\{ f^i(x) \}_{-\infty}^\infty$ form an increasing sequence. Choose any point, for example $x=0$, and define the conjugating function from an open neighbourhood $U$ of $0$ to a neighbourhood $V$ of $0$, say by letting $h(x)=x$ on $U$.
By choosing $U$ smaller if necessary we may assume that the closures of the images $f^i(U)$ are disjoint
for all $i \in \mathbb{Z}$.
Now define $h$ on $f^i(U)$, $i \in\mathbb{Z}$, by
\[
h(x) = \left( g^i\circ h \circ f^{-i} \right)\!(x).
\]
This immediately implies $h=g\circ [g^{i-1}\circ h \circ f^{-(i-1)}]\circ f^{-1}$ and since the composition of maps in square brackets is simply $h$ on $f^{i-1}(U)$ the functions $h$ defined in this way satisfy $h\circ f=h\circ g$.

Now write $f^i(U)=(u_i,u^\prime_i)$. Using a $C^\infty$ jump function extend $h$ on $[ u^\prime_0,u_1]$ so that $h$ is a $C^r$ diffeomorphism on $(u_0, u^\prime_1)$. We can now push $h$ forwards and backwards as in
the proof of Theorem \ref{thm:mono},
and the $C^{r}$ differentiability of $h$ follows from the fact that $f$ and $g$ are $C^r$.  
\end{proof}

The non-hyperbolic case is more subtle and is treated by Takens \cite{Takens1973} in the $C^\infty$ case and Kuczema \emph{et al.} \cite{KCG1990} (Theorem 8.4.5) in the $C^2$ case, see also \cite{Young}.

\begin{theorem}(\cite{KCG1990,Takens1973})
Suppose that $f$ and $g$ are $C^2$ and 
\[
f(x)=x+ax^p+O(x^{p+1}), \quad g(x)=x+bx^p+O(x^{p+1}), \quad p= 2, 3, \dots .
\]
Then if $ab>0$ there is a $C^1$ increasing conjugacy between $f$ and $g$ on neighbourhoods of the origin, and 
in particular $f$ is differentiably conjugate to $y\to y+\textrm{sign}(a)y^p$. 
If $f$ is $C^\infty$ then there is a $C^\infty$ increasing conjugacy to
\[
y_{n+1}=y_n+\textrm{sign}(a) y_n^p+\alpha_py_n^{p+1},
\]
where, if $f^{(r)}(x)$ is the $r^{th}$ derivative of $f$,
$\alpha_p=\left(\frac{p!}{|f^{(p)}(0)|}\right)^{\frac{p}{p-1}}\frac{f^{(p+1)}(0)}{(p+1)!}.$
\label{thm:takens}\end{theorem}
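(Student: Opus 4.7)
The plan is to address the $C^1$ statement (conjugacy to $y\mapsto y+\mathrm{sign}(a)y^p$) and the $C^\infty$ statement separately, in both cases working outward from the fixed point along the two half-lines on which $f-\mathrm{id}$ has constant sign.

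For the $C^1$ part, I would first reduce by a linear rescaling $h_0(x)=\lambda x$ with $\lambda=|a|^{1/(p-1)}$ to the case $a=\mathrm{sign}(a)$, and similarly for $g$, so that it suffices to conjugate $f$ to the polynomial target $F(y)=y+\sigma y^p$ where $\sigma=\mathrm{sign}(a)$. On each of the half-neighbourhoods $(-\delta,0)$ and $(0,\delta)$, the maps $f$ and $F$ are increasing $C^r$ diffeomorphisms with no interior fixed points whose displacements have the same sign (the signs match on both sides thanks to $ab>0$ and the parity of $p$). Following the construction used in Theorem \ref{thm:diffnofp}, namely choosing a fundamental-domain diffeomorphism and extending by the conjugation relation, produces $C^1$ conjugacies $h_+$ on $(0,\delta)$ and $h_-$ on $(-\delta,0)$; setting $h(0)=0$ assembles these into a homeomorphism of a neighbourhood of the origin.

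The main obstacle is to promote this assembly to a genuine $C^1$ conjugacy at the fixed point itself, since differentiability of $h$ at $0$ is not delivered by Theorem \ref{thm:diffnofp}. The required estimate rests on an asymptotic analysis of orbits near $0$: from $f(x)=x+\sigma x^p+O(x^{p+1})$ one derives the Szekeres-type expansion
\[
f^n(x_0)^{\,1-p}=x_0^{\,1-p}-(p-1)\sigma n+O(\log n),
\]
so that the iterates approach $0$ at rate $((p-1)|n|)^{-1/(p-1)}$, with an identical rate for $F$. Choosing the seed diffeomorphism on the outermost fundamental domain to be affine with slope prescribed by matching these asymptotics (smoothed by a bump far from $0$) then forces $h(x)/x\to 1$ and $h'(x)\to 1$ as $x\to 0$, giving the $C^1$ matching across the fixed point. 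Propagation of $C^1$ smoothness out through the remaining fundamental domains is routine and is handled exactly as in the proof of Theorem \ref{thm:diffnofp}.

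For the $C^\infty$ statement I would first compute the formal normal form. Successive near-identity changes of coordinates $\phi_k(x)=x+c_k x^k$ for $k\ge p+2$ each cancel the degree-$k$ Taylor coefficient of $f$ by a direct chain-rule calculation, while the degree-$p$ and degree-$(p+1)$ coefficients are resonant and persist; combining the preliminary rescaling with the reduction at degree $p+1$ yields exactly the formula
\[
\alpha_p=\left(\frac{p!}{|f^{(p)}(0)|}\right)^{\!\frac{p}{p-1}}\frac{f^{(p+1)}(0)}{(p+1)!}.
\]
To convert the formal conjugacy into an actual $C^\infty$ one, I would invoke Borel's lemma to realise the formal series as a $C^\infty$ germ $\Phi$; the conjugated map $\Phi^{-1}\circ f\circ\Phi$ then agrees with the polynomial target to all orders at $0$, so their difference is flat at the origin. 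Repeating the half-line construction from the $C^1$ case using $\Phi$ as the seed diffeomorphism and matching derivatives of all orders at $0$ (possible because the flatness of the residual makes the match trivial to arbitrary order) yields the required $C^\infty$ conjugacy. The hardest steps are the Szekeres asymptotic that underpins the differentiability at $0$ and the explicit identification of $\alpha_p$ after the two-step reduction; the rest is iterative bookkeeping.
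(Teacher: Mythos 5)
The paper does not prove this theorem itself; it is quoted from \cite{KCG1990} (Theorem 8.4.5) and \cite{Takens1973}, so the benchmark is the Abel-function (Fatou-coordinate) proof given there, and measured against that your sketch has two genuine gaps. First, in the $C^1$ part the hard point is exactly the one you flag, but your proposed fix does not resolve it. The fundamental-domain construction of Theorem~\ref{thm:diffnofp}, together with the Szekeres asymptotics and monotonicity, does give $h(x)/x\to1$, i.e.\ differentiability of $h$ \emph{at} $0$ with $h'(0)=1$, and this holds for essentially any increasing seed; what it does not give is continuity of $h'$ at $0$, which is what a $C^1$ conjugacy on a neighbourhood requires. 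Along the fibre $x=f^n(t)$ the chain rule gives $h'(f^n(t))=h_0'(t)\,(g^n)'(h_0(t))/(f^n)'(t)$, and since $(f^n)'(t)=A_f'(t)/A_f'(f^n(t))\sim c\,A_f'(t)\,n^{-p/(p-1)}$, where $A_f$ is the Abel coordinate solving $A_f\circ f=A_f+1$, the limit of $h'$ along that fibre is $h_0'(t)\,A_g'(h_0(t))/A_f'(t)$. This is identically $1$ only if $A_g\circ h_0=A_f+\mathrm{const}$, i.e.\ only if the seed is the Fatou-coordinate comparison $h_0=A_g^{-1}\circ(A_f+c)$ — a one-parameter family, not the set of all fundamental-domain diffeomorphisms. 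A generic affine seed smoothed by a bump produces an $h$ whose derivative oscillates as $x\to0$, so the assertion that such a seed ``forces $h'(x)\to1$'' is precisely the unproved (and, as stated, false) step; the proof in \cite{KCG1990} instead constructs the principal Abel solution with controlled derivative asymptotics and defines $h$ through it.

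Second, the formal-normal-form step in your $C^\infty$ argument is incorrect for $p\ge3$. Conjugating by $x\mapsto x+cx^k$ changes the coefficient at degree $p+k-1$ by $ca(k-p)$ plus higher-order terms, so every order $j\ge p+1$ \emph{except} $j=2p-1$ can be removed; the resonant term sits at degree $2p-1$ (Takens' normal form $x+\sigma x^p+\beta x^{2p-1}$ in \cite{Takens1973}), and $2p-1=p+1$ only when $p=2$. Hence your claim that degrees $p$ and $p+1$ are resonant while all degrees $\ge p+2$ can be killed, and the derivation of $\alpha_p$ by a reduction at degree $p+1$, only goes through in the case $p=2$ (which is why $\alpha_2$ matches the classical invariant $2f_{xxx}/(3f_{xx}^2)$). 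In addition, the closing step ``the residual is flat at $0$, so matching derivatives of all orders is trivial'' is not a proof: removing a flat discrepancy at a parabolic fixed point is the analytic core of Takens' theorem and requires genuine estimates (embedding in a flow, or fundamental-domain constructions with derivative control to all orders at the fixed point), not merely Borel realisation of the formal conjugacy followed by the $C^1$ gluing argument.
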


Note that
\begin{equation}\label{eq:alphas}
\alpha_2=\frac{2f_{xxx}}{3f_{xx}^2} \Big|_{x=0}, \quad
\alpha_3=\sqrt{\frac{3}{8}}\frac{f_{xxxx}}{|f_{xxx}|^{\frac{3}{2}}} \Bigg|_{x=0},
\end{equation}
which are quantities that appear in our results below.

To the best of our knowledge it is not known whether the conjugacy can be made smoother under the assumption
that $f$ and $g$ are $C^r$, $3\le r<\infty$.
The statement of Kuczema \emph{et al.}~\cite{KCG1990} allows for non-integer values of $p$
but we have stated the result for the cases needed below.
However, their construction is for the half neighbourhood $[0,a)$ and to extend to $(-a,0]$ as a $C^1$ function the transformation $x\to -x$ can be used giving coefficents $(-1)^{p+1}a$ and $(-1)^{p+1}b$ which lead to the same derivative for the conjugating function at $0$ when transformed back to $x\le 0$.   

As mentioned in \S\ref{sect:intro}, the main tool for proving the existence and persistence of periodic orbits is the Implicit Function Theorem. This comes in many forms. For our purposes the statement below, from \cite{IFT} (Theorem 2.1), gives the important smoothness conditions we need. 

\begin{theorem}(Implicit Function Theorem)
Let $U$ be an open neighbourhood of $(x_0,y_0) \in \mathbb{R}^p\times \mathbb{R}^q$. Let $F : U \to \mathbb{R}^q$ be a $C^r$ function
where $1 \le r\le \infty$ and suppose that $F(x_0,y_0) = 0$. If $DF_y(x_0,y_0)$ is non-singular then there exists an open neighbourhood $V$ of $(x_0,y_0)$ with $V\subseteq U$, such that $DF_y(x, y)$  is non-singular for all $(x, y)\in V$ and an open neighbourhood of $x=x_0$, $W\subset \mathbb{R}^p$, and a unique function $f : W\to \mathbb{R}^q$ which is $C^r$ such that $f(x_0) = y_0$ and, for all $x \in W$,
\[
(x, f(x)) \in V \quad \textrm{and} \quad F(x, f(x)) = 0 .
\]
Moreover,  
\[
Df (x_0) = -[DF_y(x_0,y_0)]^{-1}DF_x(x_0,y_0).
\]
\label{thm:IFT}
\end{theorem}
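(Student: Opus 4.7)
The plan is to deduce the implicit function theorem from the inverse function theorem by augmenting $F$ into a map from $\mathbb{R}^{p+q}$ to $\mathbb{R}^{p+q}$ whose invertibility follows directly from the non-singularity of $DF_y(x_0,y_0)$. Explicitly, I would define $\Phi : U \to \mathbb{R}^p \times \mathbb{R}^q$ by
\[
\Phi(x,y) = \bigl(x,\,F(x,y)\bigr).
\]
Then $\Phi$ is $C^r$, $\Phi(x_0,y_0) = (x_0,0)$, and its total derivative at $(x_0,y_0)$ has block form
\[
D\Phi(x_0,y_0) = \begin{pmatrix} I_p & 0 \\ DF_x(x_0,y_0) & DF_y(x_0,y_0) \end{pmatrix},
\]
whose determinant is $\det DF_y(x_0,y_0) \neq 0$. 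The inverse function theorem then yields open neighbourhoods $V \subseteq U$ of $(x_0,y_0)$ and $V' \subseteq \mathbb{R}^{p+q}$ of $(x_0,0)$ on which $\Phi : V \to V'$ is a $C^r$ diffeomorphism.

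Because the first coordinate of $\Phi$ is the identity, its local inverse $\Psi : V' \to V$ must have the form $\Psi(x,z) = \bigl(x,\psi(x,z)\bigr)$ for some $C^r$ function $\psi$. I would then choose an open neighbourhood $W \subseteq \mathbb{R}^p$ of $x_0$ small enough that $(x,0) \in V'$ for every $x \in W$, and define
\[
f(x) := \psi(x,0), \qquad x \in W.
\]
By construction $f$ is $C^r$, $f(x_0) = y_0$, $(x,f(x)) \in V$, and $F(x,f(x)) = 0$. Uniqueness of $f$ on $W$ (with values in the projection of $V$) follows from the injectivity of $\Phi$ on $V$: any other solution $(x,y) \in V$ of $F(x,y) = 0$ satisfies $\Phi(x,y) = (x,0) = \Phi(x,f(x))$, forcing $y = f(x)$. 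Non-singularity of $DF_y$ throughout $V$ is immediate from non-singularity of $D\Phi$ on $V$. Finally, differentiating the identity $F(x,f(x)) \equiv 0$ with the chain rule gives $DF_x(x_0,y_0) + DF_y(x_0,y_0)\,Df(x_0) = 0$, which rearranges to the stated formula for $Df(x_0)$.

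The real work, of course, is hidden inside the inverse function theorem, so the main obstacle is the proof of that result. The standard route is via the Banach contraction mapping principle: set $A = D\Phi(x_0,y_0)$ and study the map $y \mapsto y - A^{-1}\bigl(\Phi(x,y) - (x,0)\bigr)$, which becomes a uniform contraction on a small closed ball around $y_0$ once $x$ is close enough to $x_0$, thanks to continuity of $D\Phi$ and the fact that its derivative in $y$ vanishes at the base point. The fixed point depends continuously on $x$, and a standard bootstrap using the chain rule promotes $C^1$ regularity to $C^r$ using the $C^r$ regularity of $\Phi$. Since this argument is the true engine behind Theorem \ref{thm:IFT}, I would either cite it from a standard source such as \cite{IFT} or, if a self-contained treatment is required, carry out the contraction-mapping estimate directly rather than appeal to the inverse function theorem as a black box.
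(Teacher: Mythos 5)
Your argument is correct, but note that the paper does not prove Theorem~\ref{thm:IFT} at all: it is quoted verbatim as Theorem~2.1 of \cite{IFT} and used as a black box, so there is nothing internal to compare against. What you give is the standard reduction to the inverse function theorem via the graph map $\Phi(x,y)=(x,F(x,y))$, and every step checks out: the block-triangular derivative has determinant $\det DF_y(x_0,y_0)\neq 0$; the local inverse preserves the first coordinate, so $f(x)=\psi(x,0)$ is $C^r$ with $F(x,f(x))=0$; uniqueness of solutions with graph in $V$ follows from injectivity of $\Phi$ on $V$; non-singularity of $DF_y$ throughout $V$ follows because $\det D\Phi(x,y)=\det DF_y(x,y)$ and $D\Phi$ is invertible on $V$ (or simply by continuity of the determinant after shrinking $V$); and the formula for $Df(x_0)$ is the chain rule applied to $F(x,f(x))\equiv 0$. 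Two small points of care: in your contraction-mapping sketch the map $y\mapsto y-A^{-1}\bigl(\Phi(x,y)-(x,0)\bigr)$ with $A=D\Phi(x_0,y_0)$ does not typecheck as written, since $A^{-1}$ returns a $(p+q)$-vector; unwinding the block inverse shows its second component is exactly the usual Newton-type map $y\mapsto y-DF_y(x_0,y_0)^{-1}F(x,y)$, which is what you should iterate (or, equivalently, run the contraction for the inverse of $\Phi$ in the full $(p+q)$ variables). Also, the $C^r$ bootstrap for $1\le r\le\infty$ deserves a sentence (differentiating the fixed-point identity and inducting on $r$), precisely because the paper leans on the sharp smoothness statement. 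Given that the paper's whole purpose in stating the theorem is the smoothness bookkeeping, citing \cite{IFT} as you suggest, rather than reproving the inverse function theorem, is the appropriate choice.
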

If $p=q=1$ the condition that $DF_y(x_0,y_0)$ is non-singular, i.e.~$\det \left( DF_y(0,0) \right) \ne 0$, is equivalent to
$\frac{\partial F}{\partial y}(x_0, y_0)\ne 0$ in which case  $y=f(x)= f^\prime (x_0)(x-x_0)+o(|x-x_0|)$ locally, with 
\[
f^\prime (x_0)=-\left(\frac{\partial F}{\partial y}(x_0,y_0)\right)^{-1}\frac{\partial F}{\partial x}(x_0,y_0).
\]
To summarise, given $q$ $C^r$ real equations in $p+q$ variables, the Implicit Function Theorem provides conditions under which there is a locally unique $C^r$ function determining $q$ of the variables as a function of the remaining $p$ variables on which the original $q$ real equations are satisfied.

\section{Bifurcation Theorems}
\label{sect:results}
\setcounter{equation}{0}

The conditions of Belitskii's Theorem do not imply the existence of global conjugacies,
so the best we can hope for in general is local.
This is reflected in the theorems below.

Each theorem concerns a map $x_{n+1} = f(x_n,\mu)$ and an extended normal form.
For the saddle-node bifurcation this form is
\begin{equation}
y_{n+1} = g(y_n,\nu,a) = y_n + \nu - y_n^2 + a y_n^3 \,.
\label{eq:snfnf}
\end{equation}

\begin{theorem}(Saddle-node bifurcation)
Suppose $f$ is $C^r$, $r \ge 4$, and 
\begin{equation}
f(0,0) = 0, \quad f_x(0,0) = 1, \quad f_\mu(0,0) > 0, \quad \textrm{and} \quad f_{xx}(0,0) < 0.
\label{eq:snf}
\end{equation}
Let $g$ be the truncated normal form (\ref{eq:snfnf}). There exists a neighbourhood $N$ of $x=0$
and $\mu_0 > 0$ such that if $\mu \in (-\mu_0,0)$ then $f$ has no fixed points in $N$,
and if $\mu \in (0,\mu_0)$ then $f$ has two fixed points in $N$.
Moreover, there exists a neighbourhood $M$ of $y=0$
and continuous functions $\nu(\mu)$ and $a(\mu)$ 
with
\begin{equation}
\nu(0) = 0, \quad
\lim_{\mu \to 0^+} \nu'(\mu) = -\frac{f_\mu f_{xx}}{2} \Big|_{(0,0)}, \quad
a(0) = \frac{2 f_{xxx}}{3 f_{xx}^2} \Big|_{(0,0)},
\label{eq:snFormulae}
\end{equation}
such that with $\tilde{g}(y,\mu) = g(y,\nu(\mu),a(\mu))$,
\begin{enumerate}
\item
if $\mu \in (-\mu_0,0)$ then $f|_N$ is $C^r$-conjugate to $\tilde{g}|_M$,
\item
if $\mu = 0$ then $f|_N$ is $C^1$-conjugate to $\tilde g|_M$, and
\item
if $\mu \in (0,\mu_0)$ then $f|_N$ and $\tilde g|_M$ are
$C^{r-1}$-conjugate on the basins of attraction/repulsion of their corresponding fixed points.
\end{enumerate}
\label{thm:snb}
\end{theorem}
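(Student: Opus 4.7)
The plan follows the four-step framework of \S\ref{sect:intro}.

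\textbf{Skeletons.} Set $F(x,\mu)=f(x,\mu)-x$. Since $F_x(0,0)=0$ but $F_\mu(0,0)=f_\mu(0,0)>0$, I apply Theorem~\ref{thm:IFT} with the roles of $x$ and $\mu$ swapped to obtain a $C^r$ function $\mu=\phi(x)$ satisfying $\phi(0)=\phi'(0)=0$ and $\phi''(0)=-f_{xx}(0,0)/f_\mu(0,0)>0$. Thus $\phi$ has a nondegenerate minimum at $x=0$, so on a sufficiently small neighbourhood $N$ of $0$ the map $f$ has no fixed points for $\mu\in(-\mu_0,0)$ and exactly two fixed points $x_-(\mu)<0<x_+(\mu)$ for $\mu\in(0,\mu_0)$. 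Repeating the argument on $G(y,\nu,a)=\nu-y^2+ay^3$ gives the normal-form skeleton $y_\pm(\nu,a)$ for $\nu>0$ small and $a$ bounded.

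\textbf{Multiplier matching.} Write $\lambda_\pm(\mu)=f_x(x_\pm(\mu),\mu)$ and $\Lambda_\pm(\nu,a)=1-2y_\pm(\nu,a)+3a\,y_\pm(\nu,a)^2$. The system $\lambda_\pm(\mu)=\Lambda_\pm(\nu,a)$ degenerates at the bifurcation because both multipliers equal $1$ and both pairs of fixed points collide, so I desingularise via $s=\sqrt{\mu}$ and $t=\sqrt{\nu}$. In these variables $x_\pm\sim\pm c_1 s$ with $c_1=\sqrt{-2f_\mu/f_{xx}}$ and $y_\pm\sim\pm t$, and $\lambda_++\lambda_-$, $\Lambda_++\Lambda_-$ are smooth even functions of $s,t$ while $\lambda_+-\lambda_-$, $\Lambda_+-\Lambda_-$ are odd. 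The difference equation, divided by $s$, determines $t$ in terms of $(s,a)$ and forces $\nu'(0^+)=-f_\mu f_{xx}/2$; the sum equation, divided by $s^2$, then pins down $a(0)=2f_{xxx}/(3f_{xx}^2)$. A short Jacobian computation shows the rescaled system is IFT-solvable in $(t,a)$ at $s=0$, yielding $C^{r-2}$ branches; squaring returns $\nu(\mu)=t(\sqrt{\mu})^2$, and any continuous extension to $\mu\le 0$ suffices since no fixed-point matching is required there.

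\textbf{Local conjugacies.} After possibly shrinking $N$ and $M$ so that $f(\cdot,\mu)$ and $\tilde{g}(\cdot,\mu)$ are monotone increasing $C^r$ diffeomorphisms, I treat the three cases in turn. For $\mu\in(-\mu_0,0)$ both maps have no fixed points and $f-x$ and $\tilde{g}-y$ share the same sign by construction, so after extending each to an increasing $C^r$ diffeomorphism of $\mathbb{R}$ without fixed points via a smooth cut-off, Theorem~\ref{thm:diffnofp} yields a $C^r$ conjugacy. For $\mu=0$ both maps have a unique fixed point at $0$ with multiplier $1$, negative quadratic coefficient, and---by the prescription of $a(0)$---matching Takens invariant $\alpha_2$, so Theorem~\ref{thm:takens} supplies a $C^1$ conjugacy. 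For $\mu\in(0,\mu_0)$ both maps have exactly two fixed points with matching multipliers, both distinct from $1$ since $\lambda_\pm(\mu)=1\pm O(\sqrt{\mu})$; extending to global increasing $C^r$ diffeomorphisms of $\mathbb{R}$ with the same fixed-point sets and invoking Theorem~\ref{thm:difffp} (Belitskii with $n=2$) produces $C^{r-1}$ conjugacies on $(-\infty,x_+(\mu))$ and on $(x_-(\mu),\infty)$, which are precisely the basins of repulsion of $x_-(\mu)$ and attraction of $x_+(\mu)$.

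\textbf{Main obstacle.} The principal technical point is the multiplier-matching step at $\mu=0$: the unknowns $\nu,y_\pm$ and the data $\mu,x_\pm$ jointly degenerate like $\sqrt{\mu}$, so the naive Jacobian is singular. The square-root blow-up resolves this, but the argument rests on the nontrivial compatibility that the value of $a(0)$ forced by equating the two multipliers to $O(\mu)$ is identical to the Takens invariant needed for the $C^1$ conjugacy at the bifurcation. This compatibility is exactly what formula (\ref{eq:snFormulae}) asserts, and verifying it is the crucial calculation holding the three cases together.
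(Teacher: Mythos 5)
Your skeleton, your square-root blow-up of the multiplier equations (sum and difference, divided by $s$ and $s^2$, then the Implicit Function Theorem in $(t,a)$), and your use of Theorem~\ref{thm:difffp} for $\mu>0$ and Theorem~\ref{thm:takens} at $\mu=0$ all follow essentially the same route as the paper, and the values $p_0$, $a_0$ you force do reproduce \eqref{eq:snFormulae}. (A small discrepancy: the rescaled sum equation is only $C^{r-3}$, so the branches are $C^{r-3}$ rather than $C^{r-2}$, but this does not affect the statement.)

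The genuine gap is in case (i), $\mu\in(-\mu_0,0)$. You assert that ``any continuous extension of $\nu$ to $\mu\le 0$ suffices'' and then extend both maps to fixed-point-free increasing diffeomorphisms of $\mathbb{R}$ and invoke Theorem~\ref{thm:diffnofp}. That theorem produces a conjugacy of the extended maps on all of $\mathbb{R}$, but the theorem being proved asserts a conjugacy between $f|_N$ and $\tilde g|_M$ on \emph{fixed} neighbourhoods $N$ and $M$, independent of $\mu$. On a fixed interval the number of iterates the right endpoint needs to traverse the interval (the escape time) is a conjugacy invariant: if $X_0$ takes $n$ steps to cross $N$ under $f(\cdot,\mu)$ while $Y_0$ takes $n'\ne n$ steps to cross $M$ under $\tilde g(\cdot,\mu)$, then no conjugacy $h:N\to M$ can satisfy $h\circ f=\tilde g\circ h$ where both sides are defined (indeed $\nu\equiv 0$ for $\mu\le 0$ is a continuous extension for which $\tilde g$ even has a fixed point in $M$). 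Since the escape time of $\tilde g$ is governed by $\nu$, the extension of $\nu(\mu)$ to $\mu<0$ is constrained, not free. This is exactly the point the paper singles out as the hardest part of the saddle-node case: it constructs the parameter sequences $\mu_n$ and $\nu_n$ defined by $f^n(X_0,\mu_n)=-X_0$ and the analogous condition for the normal form, proves their existence and monotonicity via monotonicity of $f$ in $x$ and $\mu$ together with the intermediate value theorem, and then chooses a $C^\infty$ interpolation $\nu(\mu)$ with $\nu(\mu_n)=\nu_n$ so that escape times are synchronised before running the fundamental-domain construction of Theorem~\ref{thm:diffnofp} anchored at the endpoints $(f(X_0),X_0)$ and $(\tilde g(Y_0),Y_0)$. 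Your proof needs this (or an equivalent) synchronisation argument; without it, case (i) does not follow.
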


Note that the signs of $f_{\mu}(0,0)$ and $f_{xx}(0,0)$ (provided they are non-zero)
can be chosen as stated without loss of generality by using the orientation-reversing transformations
$x \mapsto -x$ and $\mu \mapsto -\mu$ where necessary.  The extra differentiablility ($C^r$, $r\ge 4$) is required 
for the application of the Implicit Function Theorem to obtain the parameters $\nu$ and $a$ of the normal
form as a function of $\mu$. The maps remain $C^r$ so the conjugacies in neighbourhoods of the 
fixed points are $C^{r-1}$ by Theorem~\ref{thm:difffp}. 

One can show that $\nu(\mu)$ is $C^1$ except possibly at $\mu = 0$.
The value of the derivative $\nu'(\mu)$ as $\mu$ approaches $0$ from the right is well-defined
and takes the value $-\frac{f_\mu f_{xx}}{2} \big|_{(0,0)}$ as indicated above.
This value represents the rate at which $\mu$ unfolds the bifurcation relative to $\nu$ in the extended normal form.
Furthermore, as evident in the proof below,
for $\mu \in (0,\mu_0)$ if we treat $\nu$ and $a$ as functions of $m = \sqrt{\mu}$,
then $\nu$ is $C^{r-2}$ and $a$ is $C^{r-3}$. The value of $a(0)$ is precisely Takens' $\alpha_2$ in (\ref{eq:alphas}).

For the remaining bifurcations we make the (standard) simplification of assuming the origin is
always a fixed point.
Specifically we assume
\begin{equation}
f(0,\mu) = 0, \quad \text{for all $\mu$ in a neighbourhood of $0$}.
\label{eq:originAlwaysFixed}
\end{equation}
For transcritical the extended normal form is
\begin{equation}
y_{n+1} = g(y_n,\nu,a) = y_n + \nu y_n - y_n^2 + a y_n^3 \,.
\label{eq:tcfnf}
\end{equation}

\begin{theorem}(Transcritical bifurcation)
Suppose $f$ is $C^r$, $r \ge 4$, satisfying \eqref{eq:originAlwaysFixed} and
\begin{equation}	
f_x(0,0) = 1, \quad f_{xx}(0,0) < 0, \quad \textrm{and} \quad f_{x \mu}(0,0) > 0.
\label{eq:tcf}
\end{equation}
Let $g$ be the truncated normal form (\ref{eq:tcfnf}). There exists a neighbourhood $N$ of $x=0$ and $\mu_0 > 0$ such that if
$\mu \in (-\mu_0,\mu_0) \setminus \{0\}$ then $f$ has two fixed points in $N$.
Moreover, there exists a neighbourhood $M$ of $y=0$,
a $C^{r-1}$ function $\nu(\mu)$,
and a $C^{r-3}$ function $a(\mu)$ with
\begin{equation}
\nu(0) = 0, \quad
\nu'(0) = f_{x\mu}(0,0), \quad
a(0) = \frac{2 f_{xxx}}{3 f_{xx}^2} \Big|_{(0,0)},
\label{eq:tcFormulae}
\end{equation}
such that with $\tilde{g}(y,\mu) = g(y,\nu(\mu),a(\mu))$,
\begin{enumerate}
\item
if $\mu \in (-\mu_0,\mu_0) \setminus \{0\}$
then $f|_N$ and $\tilde{g}|_M$ are
$C^{r-1}$-conjugate on the basins of attraction/repulsion of their corresponding fixed points, and
\item
if $\mu = 0$ then $f|_N$ is $C^1$-conjugate to $\tilde g|_M$.
\end{enumerate}
\label{thm:tcb}
\end{theorem}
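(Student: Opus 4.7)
The plan is to follow the four-step framework of \S\ref{sect:intro} directly, mirroring the saddle-node treatment in \S\ref{sect:saddlenode} but with the simplification that the origin is always a fixed point. For the skeleton of $f$, I would use $f(0,\mu)\equiv 0$ to factor $f(x,\mu)-x = x\,\phi(x,\mu)$ with $\phi$ of class $C^{r-1}$, $\phi(0,0)=0$, $\phi_x(0,0)=\tfrac12 f_{xx}(0,0)<0$ and $\phi_\mu(0,0)=f_{x\mu}(0,0)>0$, and then invoke Theorem~\ref{thm:IFT} to obtain a $C^{r-1}$ branch of non-trivial fixed points $x_2(\mu)$ with $x_2(0)=0$. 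The corresponding normal-form skeleton is explicit: $y_1=0$ and $y_2(\nu,a)=(1-\sqrt{1-4a\nu})/(2a)=\nu+a\nu^2+O(\nu^3)$ is the root of $ay^2-y+\nu=0$ that continues to the origin. Matching multipliers at the trivial fixed points forces $\nu(\mu)=f_x(0,\mu)-1$, which is manifestly $C^{r-1}$ and realises $\nu(0)=0$, $\nu'(0)=f_{x\mu}(0,0)$.

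The substantive step is to choose $a(\mu)$ so that multipliers at the non-trivial fixed points also agree. Setting $p_1=f_{x\mu}(0,0)$, $q_0=\tfrac12 f_{xx}(0,0)$, $r_0=\tfrac16 f_{xxx}(0,0)$ and $p_2=\tfrac12 f_{x\mu\mu}(0,0)$, a Taylor expansion to second order yields
\begin{align*}
f_x(x_2(\mu),\mu) &= 1 - p_1\mu + \left(\tfrac{r_0}{q_0^2}\,p_1^2 - p_2\right)\!\mu^2 + O(\mu^3),\\
g_y(y_2(\nu(\mu),a),\nu(\mu),a) &= 1 - p_1\mu + (a\,p_1^2 - p_2)\,\mu^2 + O(\mu^3).
\end{align*}
Their difference $u(a,\mu)$ therefore satisfies $u(a,0)=0$ and $\partial_\mu u(a,0)=0$ for every $a$, a double degeneracy that reflects the coalescence of the two fixed points at the bifurcation and, I expect, will be the main obstacle of the proof. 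The naive Implicit Function Theorem does not apply; to remove the obstruction I would appeal to Hadamard's lemma (or Taylor's theorem with integral remainder) twice, writing $u(a,\mu)=\mu^2 v(a,\mu)$ with $v$ of class $C^{r-3}$ and $v(a,0)=(r_0/q_0^2-a)\,p_1^2$. At $a_0=r_0/q_0^2 = 2f_{xxx}(0,0)/(3f_{xx}(0,0)^2)$ we have $v(a_0,0)=0$ and $\partial_a v(a_0,0)=-p_1^2\neq 0$, so Theorem~\ref{thm:IFT} yields the required $C^{r-3}$ function $a(\mu)$ with the stated initial value. The loss of exactly two derivatives in this division accounts precisely for the $C^{r-3}$ regularity claimed in the theorem.

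To conclude, for $\mu\in(-\mu_0,\mu_0)\setminus\{0\}$ the matched multipliers are $1+\nu(\mu)$ and $1-\nu(\mu)+O(\nu(\mu)^2)$, both different from $1$ after shrinking $\mu_0$, and the hypotheses of Belitskii's Theorem~\ref{thm:difffp} hold for $f|_N$ and $\tilde g|_M$ with $n=2$. That theorem supplies a $C^{r-1}$-conjugacy on the open interval $(x_{k-1},x_{k+1})$ for each $k\in\{1,2\}$, and these intervals cover the basins of attraction and repulsion required in part~(i). For $\mu=0$ both $f(\cdot,0)$ and $\tilde g(\cdot,0)$ have only the origin as a fixed point, with multiplier $1$, and their quadratic coefficients $q_0<0$ and $-1<0$ share a common sign; Theorem~\ref{thm:takens} with $p=2$ therefore produces the $C^1$-conjugacy asserted in part~(ii). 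It is pleasant to note that the identification $a(0)=\alpha_2$ in \eqref{eq:alphas} means the two maps even share the same leading-order cubic correction at the bifurcation, although only matching signs is needed for the Takens step itself.
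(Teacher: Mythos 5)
Your proposal is correct and follows essentially the same route as the paper: matching the trivial-fixed-point multipliers to set $\nu(\mu)=f_x(0,\mu)-1$, dividing the non-trivial-multiplier mismatch by $\mu^2$ (your Hadamard-lemma step is the paper's piecewise-defined quotient $G(a,\mu)$) and applying the Implicit Function Theorem at $a_0=\frac{2f_{xxx}}{3f_{xx}^2}\big|_{(0,0)}$ with $\partial_a$-derivative $\pm f_{x\mu}(0,0)^2\neq 0$ to obtain the $C^{r-3}$ function $a(\mu)$, then invoking Belitskii's Theorem~\ref{thm:difffp} for $\mu\neq 0$ and Theorem~\ref{thm:takens} with $p=2$ for $\mu=0$. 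The computed expansions and regularity bookkeeping agree with the paper's, so no changes are needed.
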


Next we consider the pitchfork bifurcation.
This bifurcation involves one more fixed point than the last two cases
so we need an extra parameter in our extended normal form
in order to be able to match the derivatives of all the fixed points.
Specifically we use the form
\begin{equation}
y_{n+1} = g(y_n,\nu,a,b) = y_n + \nu y_n - y_n^3 + a y_n^5 + b \nu y_n^2 \,.
\label{eq:pffnf}
\end{equation}
In this form the bifurcation is supercritical in that the pair of fixed points that
bifurcate from zero are stable.
Unlike the previous cases, the subcritical bifurcation, in which the non-trivial fixed points are unstable,
cannot be obtained from transformations of the supercritical case. The methods are 
of course analogous and we will not go through the argument again here.
 
\begin{theorem}(Supercritical pitchfork bifurcation)
Suppose $f$ is $C^r$, $r \ge 7$, satisfying \eqref{eq:originAlwaysFixed} and
\begin{equation}
f_x(0,0) = 1, \quad f_{xx}(0,0) = 0, \quad f_{x\mu}(0,0) > 0, \quad \textrm{and} \quad f_{xxx}(0,0) < 0.
\label{eq:pff}
\end{equation}
Let $g$ be the truncated normal form (\ref{eq:pffnf}). There exists a neighbourhood $N$ of $x=0$ and $\mu_0 > 0$
such that if $\mu \in (-\mu_0, 0)$ then $f$ has one fixed point in $N$ ($x=0$ which is stable),
and if $\mu \in (0,\mu_0)$ then $f$ has three fixed points in $N$
($x=0$ which is unstable and two stable fixed points).
Moreover, there exists a neighbourhood $M$ of $y = 0$,
a $C^1$ function $\nu(\mu)$,
and continuous functions $a(\mu)$ and $b(\mu)$ with
\begin{equation}
\begin{array}{c}
\nu(0) = 0, \quad \nu'(0) = f_{x\mu}(0,0), \\
a(0) = \left( \frac{3 f_{xxxxx}}{10 f_{xxx}^2} - \frac{3 f_{xxxx}^2}{8 f_{xxx}^3} \middle) \right|_{(0,0)}, \quad
b(0 ) = \sqrt{\frac{6}{-f_{xxx}}} \left( \frac{f_{xxxx}}{4 f_{xxx}} + \frac{f_{xx\mu}}{2 f_{x\mu}} \middle) \right|_{(0,0)},
\end{array}
\label{eq:pfFormulae}
\end{equation}
such that with $\tilde{g}(y,\mu) = g(y,\nu(\mu),a(\mu),b(\mu))$,
\begin{enumerate}
\item
if $\mu \in (-\mu_0,\mu_0) \setminus \{0\}$
then $f|_N$ and $\tilde{g}|_M$ are
$C^{r-1}$-conjugate on the basins of attraction/repulsion of their corresponding fixed points, and
\item
if $\mu = 0$ then $f|_N$ is $C^1$-conjugate to $\tilde g|_M$.
\end{enumerate}
\label{thm:pfb}
\end{theorem}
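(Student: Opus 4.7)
My plan is to follow the four-step framework of \S\ref{sect:intro} and then invoke the conjugacy results of \S\ref{sect:basics}. Since \eqref{eq:originAlwaysFixed} makes $x=0$ a permanent fixed point of $f$, I first write $f(x,\mu)-x = x\,h(x,\mu)$ with $h$ of class $C^{r-1}$. Differentiation together with \eqref{eq:pff} gives $h(0,0)=0$, $h_x(0,0)=\tfrac12 f_{xx}(0,0)=0$, $h_\mu(0,0)=f_{x\mu}(0,0)>0$ and $h_{xx}(0,0)=\tfrac13 f_{xxx}(0,0)<0$. The normal form has the analogous factorisation $g(y,\nu,a,b)-y = y\,(\nu - y^2 + a y^4 + b\nu y)$.

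For the skeleton step, non-trivial fixed points of $f$ satisfy $h(x,\mu)=0$. Because $h_x(0,0)=0$ the Implicit Function Theorem does not apply directly, so I introduce $m=\sqrt{\mu}$ and the blow-up $x=m\xi$. A Taylor expansion gives
\begin{equation*}
h(m\xi,m^2) = m^2\!\left[ h_\mu(0,0) + \tfrac12 h_{xx}(0,0)\xi^2 + O(m) \right],
\end{equation*}
so after dividing by $m^2$ I obtain an equation $\tilde h(\xi,m)=0$ with $\tilde h(\pm\xi_0,0)=0$ where $\xi_0=\sqrt{-2h_\mu(0,0)/h_{xx}(0,0)}>0$ and $\tilde h_\xi(\pm\xi_0,0)\ne 0$. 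Theorem~\ref{thm:IFT} then produces two branches $\xi_\pm(m)$, giving non-trivial fixed points $x_\pm(\mu)=\sqrt{\mu}\,\xi_\pm(\sqrt{\mu})$ that exist precisely for $\mu>0$. An identical argument applied to the polynomial factor of $g$ yields fixed points $y_\pm(\nu,a,b)$ for $\nu>0$, confirming the fixed-point counts asserted in the theorem.

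For multiplier equivalence I match the three multipliers in turn. At the origin $g_y(0,\nu,a,b)=1+\nu$, so I set $\nu(\mu)=f_x(0,\mu)-1$, which is $C^{r-1}$ with $\nu(0)=0$ and $\nu'(0)=f_{x\mu}(0,0)$ as required. For $\mu>0$ the remaining pair of equations
\begin{equation*}
f_x(x_\pm(\mu),\mu) = g_y\bigl(y_\pm(\nu(\mu),a,b),\nu(\mu),a,b\bigr)
\end{equation*}
is reparameterised in $m=\sqrt{\mu}$ and treated as a system in $(a,b)$. A short computation shows that, to leading order in $m$, the sum of the two multipliers of $g$ at $y_\pm$ depends linearly and non-trivially on $a$ (via the symmetric term $ay^5$), while the difference depends linearly and non-trivially on $b$ (via the asymmetric term $b\nu y^2$). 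This rank-two non-degeneracy permits a further application of the IFT in $m$, producing continuous $a(m),b(m)$; the values at $m=0$ follow by matching coefficients in the Taylor expansions of both sides and agree with \eqref{eq:pfFormulae}. For $\mu\le 0$ the functions $a(\mu),b(\mu)$ are extended by their limiting values at $0$, preserving continuity.

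Finally I invoke the conjugacy theorems. For $\mu\in(-\mu_0,\mu_0)\setminus\{0\}$ the fixed points of both $f$ and $\tilde g$ are hyperbolic with matched multipliers different from $1$, so Theorem~\ref{thm:difffp} applied on open intervals containing consecutive fixed points yields the $C^{r-1}$-conjugacies on the basins of attraction/repulsion. At $\mu=0$ the origin is the unique fixed point and
\begin{equation*}
f(x,0) = x + \tfrac16 f_{xxx}(0,0)\,x^3 + O(x^4), \qquad \tilde g(y,0) = y - y^3 + O(y^5),
\end{equation*}
whose cubic coefficients have the same (negative) sign, so Theorem~\ref{thm:takens} with $p=3$ supplies the $C^1$-conjugacy. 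The principal obstacle throughout is the multiplier step: the $\sqrt{\mu}$ scaling of $x_\pm$ forces the IFT arguments for $(a,b)$ to be executed in $m=\sqrt{\mu}$ rather than $\mu$, and it is precisely the need to match two generically distinct multipliers at $x_+$ and $x_-$ using a nearly-odd normal form that compels the inclusion of the asymmetric parameter $b$ in \eqref{eq:pffnf}.
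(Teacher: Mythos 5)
Your proposal is correct and follows essentially the same route as the paper: factor $f(x,\mu)-x=x\,K(x,\mu)$, blow up with $m=\sqrt{\mu}$ to get the two non-trivial branches, set $\nu(\mu)=f_x(0,\mu)-1$ from the trivial multiplier, determine $(a,b)$ by equating the non-trivial multipliers after scaling out the leading powers of $m$ (your sum/difference splitting is exactly the paper's $J_1$, $J_1+J_2$ combination, with $b$ fixed at order $m^3$ and $a$ at order $m^4$), and conclude with Belitskii's theorem for $\mu\neq 0$ and Theorem~\ref{thm:takens} with $p=3$ at $\mu=0$. The only difference is that the paper carries out the explicit expansions (including the cubic-order term of the fixed points) needed to verify the stated values of $a(0)$ and $b(0)$, which you assert rather than compute.
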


Finally we treat period-doubling for which
\begin{equation}
y_{n+1} = g(y_n,\nu,a) = -y_n - \nu y_n + y_n^3 + a y_n^5 \,.
\label{eq:pdfnf}
\end{equation}

\begin{theorem}(Supercritical period-doubling bifurcation) 
Suppose $f$ is $C^r$, $r \ge 7$, satisfying \eqref{eq:originAlwaysFixed} and
\begin{equation}
f_x(0,0) = -1, \quad \left( 3 f_{xx}^2 + 2 f_{xxx} \middle) \right|_{(0,0)} > 0, \quad \textrm{and} \quad f_{x\mu}(0,0) < 0.
\label{eq:pdf}
\end{equation}
Let $g$ be the truncated normal form (\ref{eq:pdfnf}). There exists a neighbourhood $N$ of $x=0$ and $\mu_0 > 0$ such that
for all $\mu \in (-\mu_0,\mu_0)$ $f$ has one fixed point in $N$
($x=0$ which is stable for $\mu < 0$ and unstable for $\mu > 0$),
and if $\mu \in (0,\mu_0)$ then $f$ has a stable period-$2$ solution in $N$.
Moreover, there exists a neighbourhood $M$ of $y=0$,
a $C^1$ function $\nu(\mu)$,
and a continuous function $a(\mu)$ with
\begin{equation}
\begin{split}
\nu(0) &= 0, \quad \nu^\prime (0) = -f_{x\mu}(0,0), \\
a(0) &= \frac{1}{\left( 3 f_{xx}^2 + 2 f_{xxx} \right)^2}
\left( \frac{45}{4} f_{xx}^4 + \frac{39}{2} f_{xx}^2 f_{xxx} + 9 f_{xx} f_{xxxx} + \frac{6}{5} f_{xxxxx} \middle) \right|_{(0,0)},
\end{split}
\label{eq:pdFormulae}
\end{equation}
such that with $\tilde{g}(y,\mu) = g(y,\nu(\mu),a(\mu))$,
\begin{enumerate}
\item
if $\mu \in (-\mu_0,0)$
then $f|_N$ and $\tilde{g}|_M$ are $C^{r-1}$-conjugate on $N$,
\item
if $\mu = 0$ then $f|_N$ is $C^1$-conjugate to $\tilde g|_M$, and
\item
if $\mu \in (0,\mu_0)$
then $f|_N$ and $\tilde{g}|_M$ are
$C^{r-1}$-conjugate on the basins of repulsion of their fixed points, and
$C^{r-1}$-conjugate on the basins of attraction of their stable period-$2$ solutions.
\end{enumerate}
\label{thm:pdb}
\end{theorem}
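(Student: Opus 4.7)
I would execute the four-step strategy from Section~\ref{sect:intro} with the second iterate $f^2$ doing most of the work, since $f$ itself is orientation-reversing near the bifurcation and the tools in Section~\ref{sect:basics} are stated for increasing maps.

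\textbf{Skeletons.} The origin is a fixed point by hypothesis, and since $f_x(0,0) = -1 \ne 1$ the IFT applied to $f(x,\mu) - x$ identifies it as the unique fixed point locally. For period-$2$ orbits, direct differentiation gives $(f^2)_x(0,0) = 1$, $(f^2)_{xx}(0,0) = 0$, $(f^2)_{xxx}(0,0) = -(3 f_{xx}^2 + 2 f_{xxx})|_{(0,0)} < 0$, and $(f^2)_{x\mu}(0,0) = -2 f_{x\mu}(0,0) > 0$. Factoring $f^2(x,\mu) - x = x \, H(x,\mu)$ and solving $H = 0$ in the rescaled variable $x/\sqrt{\mu}$ produces a pair of period-$2$ points $p_1(\mu) < 0 < p_2(\mu)$ for $\mu > 0$. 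The parallel analysis of $g^2$ produces period-$2$ points for $\nu > 0$.

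\textbf{Multiplier equivalence.} I set $\nu(\mu) = -1 - f_x(0,\mu)$, which is $C^{r-1}$ with $\nu(0) = 0$ and $\nu'(0) = -f_{x\mu}(0,0)$; this immediately forces $\tilde g_y(0,\mu) = f_x(0,\mu)$. To match the period-$2$ multiplier I write $M_f(\mu) = f_x(p_1(\mu),\mu) f_x(p_2(\mu),\mu)$ and solve $M_f(\mu) = M_g(\nu(\mu), a)$ for $a$. After the substitution $m = \sqrt{\mu}$ to restore smoothness, $\partial_a M_g$ is non-zero in a neighbourhood of the bifurcation because the leading correction to the period-$2$ multiplier of $g$ depends linearly on $a$, so the IFT delivers a continuous $a(\mu)$; expanding both sides to the order where $a$ first enters yields the explicit formula \eqref{eq:pdFormulae}.

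\textbf{Conjugacies via $f^2$.} In each sub-case I first obtain a conjugacy $h$ of $f^2$ and $\tilde g^2$ from Section~\ref{sect:basics}. For $\mu < 0$, $f^2$ has $0$ as its unique fixed point with multiplier in $(0,1)$ and Theorem~\ref{thm:difffp} with $n = 1$ yields a $C^{r-1}$ conjugacy. For $\mu > 0$, $f^2$ has three hyperbolic fixed points $p_1 < 0 < p_2$, and Theorem~\ref{thm:difffp} with $n = 3$ yields $C^{r-1}$ conjugacies on $(p_1, p_2)$ (the basin of repulsion of $0$) and on each outer interval (the basins of attraction of $p_1, p_2$). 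For $\mu = 0$ the second iterates have matching cubic leading terms $f^2(x,0) = x - \tfrac{1}{6}(3 f_{xx}^2 + 2 f_{xxx})\,x^3 + \cdots$ and $\tilde g^2(y,0) = y - 2 y^3 + \cdots$, so Theorem~\ref{thm:takens} with $p = 3$ produces a $C^1$ conjugacy. To lift $h$ to a conjugacy $H$ of $f$ and $\tilde g$, I set $H = h$ on one side of the relevant fixed point and $H(y) = \tilde g(h(f^{-1}(y)))$ on its image under $f$; the identity $h \circ f^2 = \tilde g^2 \circ h$ makes this well-defined and automatically produces $H \circ f = \tilde g \circ H$.

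\textbf{Main obstacle.} The delicate step is verifying the required smoothness of $H$ at the gluing point (the fixed point $0$, or in the period-$2$ basin case, the points $p_1, p_2$). Equality of first derivatives from the two sides follows instantly from $\tilde g'(0) = f'(0)$; matching higher derivatives requires differentiating $h \circ f^2 = \tilde g^2 \circ h$ iteratively at the fixed point and invoking hyperbolicity to show that the two one-sided Taylor expansions of $H$ agree through order $r-1$, essentially the infinitesimal uniqueness that underlies Sternberg linearization. Once this is in place, the three claims of Theorem~\ref{thm:pdb} follow by restricting $H$ to the appropriate basin.
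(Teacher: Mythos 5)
Your overall route is the paper's route: reduce to the increasing second iterate, read off the pitchfork-type skeleton for $f^2$ from $G_{xx}(0,0)=-\tfrac13(3f_{xx}^2+2f_{xxx})<0$ and $G_\mu(0,0)=-2f_{x\mu}(0,0)>0$, match the trivial multiplier to get $\nu(\mu)=-1-f_x(0,\mu)$, match the period-two multiplier to get $a$, and then lift Belitskii conjugacies of $f^2,\tilde g^2$ to $f,\tilde g$; your piecewise lift $H=h$ on one component and $H=\tilde g\circ h\circ f^{-1}$ on its image is exactly the construction of Theorem~\ref{thm:h1}. However, there is a concrete error in your multiplier-equivalence step: the claim that $\partial M_g/\partial a$ is non-zero in a neighbourhood of the bifurcation is false at the bifurcation itself, which is precisely where the Implicit Function Theorem must be anchored. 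From \eqref{eq:nontrivmultpd} the period-two multiplier of the normal form is $d(a,n)=1-4n^2+4(1-a)n^4+O(n^5)$, so $\partial d/\partial a=-4n^4+O(n^5)\to 0$ as $n\to 0$; moreover $M_f(\mu)$ and $M_g(\nu(\mu),a)$ agree automatically through order $m^3$ (the $m^3$ term of \eqref{eq:dpd} vanishes, which is why no analogue of the pitchfork's $b$ is needed), so the equation $M_f=M_g$ is identically degenerate at $m=0$ and the IFT cannot be applied as you state. The repair is the desingularization the paper performs: divide the matching function by $m^4$ (the function $W(a,m)$), for which $\partial W/\partial a\,(a_0,0)=-4f_{x\mu}(0,0)^2\ne 0$; this is also where $r\ge 7$ is needed so that $W$ is at least $C^1$, and it is the expansion of $W(a_0,0)=0$ that produces \eqref{eq:pdFormulae}.

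On the lifting step, be aware that the only place where smoothness of $H$ at a gluing point is an issue is the invariant interval containing the fixed point (the basin of repulsion for $\mu>0$); on the two disjoint components forming the basin of the period-two orbit no gluing occurs. The paper handles the fixed-point interval by deferring to O'Farrell and Roginskaya, and your Sternberg-style formal matching is the right idea there, but note it relies on hyperbolicity and so does not cover your use of the same lift at $\mu=0$, where the fixed point is non-hyperbolic; for the $C^1$ claim this is harmless since only first derivatives must agree and $\tilde g_y(0)=f_x(0)=-1$ forces that, but you should say so rather than appeal to hyperbolicity. The paper avoids the issue for $\mu\le 0$ altogether by applying Sternberg and the decreasing-map extension of Theorem~\ref{thm:takens} directly to $f$ and $\tilde g$ rather than passing through $f^2$, which is the cleaner way to get items (i) and (ii).
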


In the case of the period-$2$ solution the basin of attraction is all points in $N\backslash\{0\}$.

\section{The transcritical bifurcation}
\label{sect:transcritical}
\setcounter{equation}{0}

The transcritical bifurcation provides an instructive first example of our approach.
Following most textbooks we treat the case in which the bifurcation occurs at $\mu = 0$
and $x=0$ is constrained to be a fixed point for all values of $\mu$ in a neighbourhood of $0$.
To make the main idea of our approach clear, we will not specify neighbourhoods and other details for the local results to be true (again, this follows the standard textbook approach). In the next section when the saddle-node bifurcation is described it will be necessary to be careful about how local neighbourhoods are defined.

Suppose $f$ is $C^r$ ($r \ge 4$) satisfying \eqref{eq:originAlwaysFixed}.
A transcritical bifurcation occurs at $(x,\mu) = (0,0)$ assuming
\begin{equation}
f_x(0,0)=1, \quad f_{x\mu}(0,0) \ne 0, \quad \textrm{and} \quad f_{xx}(0,0) \ne 0.
\end{equation}
By the transformations $x \to -x$ and/or $\mu \to -\mu$ if necessary we may assume
\begin{equation}\label{eq:tcspecialpg}
f_{xx}(0,0) < 0, \quad \textrm{and} \quad f_{x\mu}(0,0) > 0.
\end{equation}
The standard normal form (or truncated normal form) for this bifucation up to reversal of the $x$-direction is
$x \mapsto x+x(\nu -x)$, see Table~\ref{table:1}.
We will add a cubic term and look for functions $\nu(\mu)$ and $a(\mu)$ for which \eqref{eq:tcfnf}
is not just a convenient form, but a map that is (locally) differentiably conjugate to $f$.
We cannot expect to obtain a differentiable conjugacy on the whole of the neighbourhood we are working on,
unless $\mu = 0$, because there are two fixed points; but there will be two open sets whose union is the whole
neighbourhood and such that on each of these sets the dynamics is differentiably conjugate to the corresponding set.
This formalizes the idea that it is not just the location of fixed points,
but the details of the dynamics which is captured by the normal form,
and that this is the result of a differentiable change of coordinates.
The functions $\nu(\mu)$ and $a(\mu)$ are solutions of explicit equations
and the proof of their existence follows from the Implicit Function Theorem,
so no new technical apparatus needs to be introduced for this part of the analysis.

\subsection{Step 1: the standard skeleton}\label{subs:tcstep1pg}By definition $x=0$ is a fixed point for all $\mu$. A standard application of the Implicit Function Theorem shows that there is a unique $C^{r-1}$ curve of non-trivial fixed points $x(\mu)$ (see e.g.~\cite{Devaney, Wiggins}). By a routine calculation (see Appendices \ref{app:trans} and \ref{app:imp} for details),
\begin{equation}\label{eq:Hsolspg}
x(\mu)=-\frac{2f_{x\mu}}{f_{xx}} \bigg|_{(0,0)} \mu
- \frac{1}{3f_{xx}^3}\left(4f_{xxx}f_{x\mu}^2-6f_{xx\mu}f_{x\mu}f_{xx}+3f_{x\mu}f_{xx}^2\right) \Big|_{(0,0)} \mu^2
+ O(\mu^3).
\end{equation}
The derivative of the map evaluated at the origin
(i.e.~the multiplier of the fixed point $x=0$) is
\begin{equation}\label{eq:d0mu}
D_0(\mu)=1+f_{x\mu}(0,0) \mu + \frac{1}{2}f_{x\mu\mu}(0,0) \mu^2+O(\mu^3).
\end{equation}
Similarly the multiplier of $x(\mu)$ is
\begin{equation}\label{eq:D1mupg}
D_1(\mu)= 1-f_{x\mu}(0,0) \mu
+ \left(\frac{2f_{xxx}f_{x\mu}^2}{3f_{xx}^2}-\frac{1}{2}f_{x\mu\mu}\middle) \right|_{(0,0)} \mu^2+O(\mu^3),
\end{equation}  
and both $D_0$ and $D_1$ are $C^{r-1}$ functions.

\subsection{Step 2: fixed points of the normal form}
The normal form \eqref{eq:tcfnf} has 
\[
g_{x\nu}(0,0) =1, \quad g_{xx}(0,0) =-2, \quad g_{xxx}(0,0) =12a,
\]
and all other derivatives are zero.
Thus expressions for the fixed points of $g$ and their multipliers
can be read off from the previous subsection.
The multiplier of $y=0$ is
\begin{equation}\label{eq:d0tcpg}
d_0(\nu,a) = 1+\nu,
\end{equation}
with no error terms.
The non-trivial fixed point is
\[
y(\nu)= \nu +a\nu^2+O(\nu^3),
\]
with multiplier
\begin{equation}\label{eq:d1nupg}
d_1(\nu,a) = 1+\nu-2y+3ay^2.
\end{equation}

\subsection{Step 3: multiplier equivalence}
A necessary condition for the existence of a differentiable conjugacy between two maps is that the multipliers at corresponding fixed points are equal, i.e.
\[
d_0(\nu,a)=D_0(\mu), \quad d_1(\nu,a)=D_1(\mu).
\]
We can view this as a pair of equations to be solved for $\nu$ and $a$ in terms of $\mu$.
The first of these equations yields an immediate relationship between $\nu$ and $\mu$:
\begin{equation}\label{eq:nuinmupg}
\nu(\mu) = f_x(0,\mu)-1=f_{x\mu}(0,0)\mu + \frac{1}{2}f_{x\mu\mu}(0,0)\mu^2+O(\mu^3).
\end{equation}
The second equation also needs to be satisfied, and this is where the new normal form parameter $a$ comes in.
Let
\[
D(a,\mu)=d_1(\nu(\mu),a)-D_1(\mu )=\left(af^2_{x\mu}-\frac{2f_{xxx}f_{x\mu}^2}{3f_{xx}^2}\middle)
\right|_{(0,0)} \mu^2 +O(\mu^3),
\]
so multipliers at the non-trivial fixed points are equal if $D(a,\mu)=0$.
The Implicit Function Theorem cannot be applied to this equation directly
so we use the standard trick and consider the zeros of 
\[
G(a,\mu )=\begin{cases}\frac{D(a,\mu )}{\mu^2} & \mu \ne 0\\
\frac{1}{2}\frac{\partial^2D}{\partial \mu^2}(a,0)& \mu =0.
\end{cases}
\]
This is now is well-defined locally and $C^{r-3}$ with
\[
\frac{\partial G}{\partial \mu}(a, 0)= \frac{1}{6}\frac{\partial^3 D}{\partial \mu^3}(a,0) . 
\]
The function $G$ is amenable to a standard application of the Implicit Function Theorem
provided we choose $a(0)=a_0$ such that $G(a_0,0)=0$ and ensure $G$ is at least $C^1$, i.e. $r \ge 4$.
This implies
\begin{equation}\label{eq:tca0pg}
a_0=\frac{2f^2_{xxx}}{3f_{xx}^2} \bigg|_{(0,0)},
\end{equation}  
and since
\[
\frac{\partial G}{\partial a}(a_0,0)=f_{x\mu}(0,0)^2\ne 0,
\]
the Implicit Function Theorem guarantees a unique branch of $C^{r-3}$
solutions transversely through $(a_0,0)$.

\subsection{Step 4: differentiable conjugacies}
If $\mu =0$ then the map $f$ is $x \mapsto x+\frac{1}{2}f_{xx}x^2+O(x^3)$ and so it is differentiably conjugate to
$x\mapsto x+x^2$ (the normal form \eqref{eq:tcfnf} with $\nu =0$) by Theorem~\ref{thm:takens} \cite{KCG1990,Ofarrell2011,Takens1973,Young}. This is the source of the common view of normal form theory: the non-hyperbolic cases, having only one fixed point, are smoothly conjugate to the normal form on an open neighbourhood of the fixed point; there is no need for conjugacies on different subsets of the neighbourhood.

Suppose all the conditions described hold. If $\mu >0$ then there is a neighbourhood $N$ of $x=0$ such that if $\mu\in(-\mu_0,\mu_0)\backslash\{ 0\}$ there are two fixed points $x_1<x_2$ in $N$.  Similarly there a neighbourhood $W$ of $y=0$ such that the modified normal form has corresponding fixed points $y_1<y_2$ in $W$ if $\nu\ne 0$. Moreover there are $C^{r-3}$  functions $\nu(\mu)$ and $a(\mu)$ such that by Belitskii's Theorem (Theorem~\ref{thm:difffp}) for each $\mu$ locally there is a $C^{r-1}$-conjugacy between the two maps at corresponding values of $\mu$ and $\nu$ on $N\cap \{x<x_2\}$ and $N\cap\{x>x_1\}$; these sets are the basins of attraction and repulsion of the fixed points in $N$. 

\medskip
\emph{Remark:} Although the differentiable conjugacy is on two intervals if $\mu\ne 0$, rather than one for the non-hyperbolic case $\mu =0$, it can be smoother on its domain of definition. There are obstructions to making the local differentiable conjugacy $C^2$ in the non-hyperbolic case which are not present in the hyperbolic cases \cite{Young}.

It could be argued that the expressions here are complicated by the fact that we chose to have a coefficient of unity on the $x^2$-term of the normal form. This does indeed complicate the calculation of the connection between the two maps, but retains the simplicity of the normal form as far as possible and we prefer not to add further constants into the normal form.

\section{The saddle-node bifurcation}
\label{sect:saddlenode}
\setcounter{equation}{0}

In this section we provide an alternative approach to the saddle-node bifurcation theorem.
This relies on the standard approach to the existence of fixed points using the Implicit Function Theorem (IFT)
but then uses the results of \S\ref{sect:basics} to create a differentiable conjugacy
on different neighbourhoods of the bifurcation point.

Suppose
\begin{equation}\label{eq:nsn}
x_{n+1}=f(x_n,\mu), \quad f(0,0)=0, \quad f_x(0,0)=1,
\end{equation}
where $f$ is $C^r$ with $r\ge 4$.
If $f_\mu (0,0)$ and $f_{xx}(0,0)$ are both non-zero, the standard genericity conditions for the saddle-node bifurcation, then by a change of the sign of $x$ and $\mu$ where necessary we may assume that
\begin{equation}\label{eq:sngen}
f_{xx}(0,0)<0, \quad f_\mu (0,0)>0. 
\end{equation}

\subsection{Step 1: the standard skeleton}

We start with a standard result regarding the fixed points of $f$,
though usually the Implicit Function Theorem is used to obtain $\mu(x)$ instead of $x(\mu)$.
See Appendix \ref{app:sn} for the full calculation. 

\begin{lemma}Consider (\ref{eq:nsn}) satisfying (\ref{eq:sngen}). There is a neighbourhood $N_0$ of $(0,0)$ such that for $(x,\mu)\in N_0$ the only fixed points of (\ref{eq:nsn}) are two branches in $\mu \ge 0$ which can be written as functions of $m\ge 0$, $m^2=\mu $, with, for $k\in\{1,2\}$,
\begin{equation}\label{eq:F12}
x_{k}(m)=(-1)^k \sqrt{\frac{-2f_\mu}{f_{xx}}} \Bigg|_{(0,0)}m
+\left(\frac{f_\mu f_{xxx}-3f_{x\mu}f_{xx}}{3f_{xx}^2} \middle) \right|_{(0,0)} m^2+O(m^3).
\end{equation}
Moreover, their multipliers are
\begin{equation}\label{eq:Fderiv12}
f_x(x_{k}(m),m^2) =1+(-1)^{k+1}\sqrt{-2f_\mu f_{xx}} \,\big|_{(0,0)} m
- \frac{2}{3}\frac{f_\mu f_{xxx}}{f_{xx}} \bigg|_{(0,0)} m^2+O(m^3).
\end{equation}
\label{le:snFixedPoints}
\end{lemma}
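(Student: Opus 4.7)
The natural plan is to use the Implicit Function Theorem, but we cannot apply it directly to $F(x,\mu) = f(x,\mu) - x$ at $(0,0)$ to solve for $x(\mu)$ since $F_x(0,0) = 0$. Instead I would first solve for $\mu$ as a function of $x$, and then invert this after introducing the square-root parameter $m$. Specifically, because $F_\mu(0,0) = f_\mu(0,0) > 0$, the IFT (Theorem \ref{thm:IFT}) yields a unique $C^r$ function $\mu = \phi(x)$ on a neighbourhood of $x=0$ with $F(x,\phi(x)) = 0$. Differentiating the identity $f(x,\phi(x)) = x$ gives $\phi(0) = 0$, $\phi'(0) = (1-f_x)/f_\mu|_{(0,0)} = 0$, and $\phi''(0) = -f_{xx}(0,0)/f_\mu(0,0) > 0$; continuing this process, one reads off the full Taylor expansion of $\phi$ to the order needed.

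Because $\phi(0) = \phi'(0) = 0$ and $\phi$ is $C^r$, Taylor's theorem with remainder lets me write $\phi(x) = x^2 \psi(x)$ with $\psi$ of class $C^{r-2}$ and $\psi(0) = \tfrac{1}{2}\phi''(0) = -f_{xx}(0,0)/(2 f_\mu(0,0)) > 0$. The fixed-point equation is therefore $\mu = x^2 \psi(x)$, i.e.\ $m = \pm x\sqrt{\psi(x)}$ after taking square roots (justified by $\psi > 0$ near $0$). Setting $H(x) = x\sqrt{\psi(x)}$ gives a $C^{r-2}$ function with $H(0)=0$ and $H'(0) = \sqrt{\psi(0)} > 0$, so a second application of the IFT produces a $C^{r-2}$ inverse $H^{-1}$ defined on a neighbourhood of $0$. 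The two branches are then
\[
x_2(m) = H^{-1}(m), \qquad x_1(m) = -H^{-1}(m),
\]
with $x_2(m) > 0$ and $x_1(m) < 0$ for $m > 0$, accounting for the $(-1)^k$ in the statement. These are the only zeros of $F$ in a suitable neighbourhood, yielding the required $N_0$.

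To obtain the explicit expansion \eqref{eq:F12}, I would Taylor expand $H^{-1}$ to second order: its first derivative at $0$ is $1/\sqrt{\psi(0)} = \sqrt{-2f_\mu/f_{xx}}\big|_{(0,0)}$ (matching the leading term), and the $m^2$ coefficient comes from the $x$-derivative of $\psi$, which in turn is determined by $\phi'''(0)$. Computing $\phi'''(0)$ requires differentiating the identity $f(x,\phi(x)) = x$ three times and evaluating at $x=0$, expressing the result in terms of $f_{xx}$, $f_{x\mu}$, $f_{xxx}$, and $f_\mu$ at $(0,0)$. Substituting back gives the $m^2$ coefficient $(f_\mu f_{xxx} - 3 f_{x\mu} f_{xx})/(3 f_{xx}^2)$ claimed in \eqref{eq:F12}. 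The multiplier formula \eqref{eq:Fderiv12} is then a direct Taylor expansion of $f_x(x_k(m), m^2)$ about $(0,0)$, using the already-established expansion of $x_k(m)$ in powers of $m$ and the fact that $m^2 = \mu$ only contributes at $O(m^2)$.

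The only genuinely delicate point is the passage from $\mu = \phi(x)$ to $m = \pm H(x)$ and the subsequent inversion: it costs two derivatives of regularity, so the $C^r$ hypothesis with $r \ge 4$ is what ultimately guarantees the coefficients in \eqref{eq:F12} and \eqml{eq:Fderiv12} are meaningful (i.e.\ $x_k$ is at least $C^2$ in $m$). Everything else is a bookkeeping exercise in Taylor expansion, which I would relegate to Appendix \ref{app:sn} as the paper already indicates.
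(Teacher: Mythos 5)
Your overall route --- solving $f(x,\mu)-x=0$ for $\mu=\phi(x)$ via the Implicit Function Theorem (legitimate since $f_\mu(0,0)\ne 0$), factoring $\phi(x)=x^2\psi(x)$ with $\psi(0)=-\tfrac{f_{xx}}{2f_\mu}\big|_{(0,0)}>0$, and then inverting $m=\pm x\sqrt{\psi(x)}$ --- is sound and genuinely different from the paper's. The paper instead sets $\mu=m^2$, $x=mz$, divides the fixed-point equation by $m^2$, and applies the IFT to the resulting $C^{r-2}$ function $G(z,m)$ at the two nondegenerate zeros $z_0=\pm\sqrt{-2f_\mu/f_{xx}}\,\big|_{(0,0)}$ (Appendix \ref{app:sn}); both strategies deliver existence, local uniqueness and smoothness of the two branches and reduce \eqref{eq:F12}--\eqref{eq:Fderiv12} to Taylor bookkeeping. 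Your version has the small advantage of getting the restriction to $\mu\ge 0$ and the uniqueness of the skeleton in $N_0$ directly from the single graph $\mu=\phi(x)>0$ for $x \ne 0$; the paper's rescaling treats the two branches symmetrically from the start.

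There is, however, one concrete misstep: the identification $x_1(m)=-H^{-1}(m)$. The two solution branches of $m^2=x^2\psi(x)$ are $x_2(m)=H^{-1}(m)$ and $x_1(m)=H^{-1}(-m)$, not $-H^{-1}(m)$, because $H(x)=x\sqrt{\psi(x)}$ is not odd in general. As written, your parametrization forces $x_1(m)=-x_2(m)$ for all small $m$, which contradicts the very expansion \eqref{eq:F12} you are proving: the $O(m)$ terms carry the factor $(-1)^k$, but the $m^2$ coefficient $\left(f_\mu f_{xxx}-3f_{x\mu}f_{xx}\right)/\!\left(3f_{xx}^2\right)\big|_{(0,0)}$ is the \emph{same} for $k=1,2$, so $x_1(m)+x_2(m)=O(m^2)$ is generically nonzero. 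Quantitatively, the $m^2$ coefficient of $H^{-1}(\pm m)$ is $-\psi'(0)/\!\left(2\psi(0)^2\right)$ for both signs, whereas $-H^{-1}(m)$ would give $+\psi'(0)/\!\left(2\psi(0)^2\right)$, i.e.\ the wrong sign on one branch. The fix is immediate --- take $x_1(m)=H^{-1}(-m)$, equivalently solve $H(x)=-m$ --- and with it the rest of your argument (uniqueness of the fixed points in $N_0$, the absence of fixed points for $\mu<0$, and the multiplier expansion \eqref{eq:Fderiv12} by substitution) goes through; note also that your chain of constructions gives $x_k$ of class $C^{r-2}$ in $m$, which with $r\ge 4$ is enough for the stated $O(m^3)$ expansions.
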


\subsection{Step 2: fixed points of the normal form}The normal form \eqref{eq:snfnf} has
\[
g(0,0)=0, \quad g_y(0,0)=1, \quad g_\nu(0,0) =1, \quad g_{yy}(0,0) = -2, \quad g_{yyy}(0,0) =6a.
\]
So applying the results of the previous section we have for $\nu =n^2$, $n\ge 0$, fixed points 
\[ y_k(n)=(-1)^k n +\frac{1}{2}an^2 +O(n^3),
\]
for $k\in\{1,2\}$.
Moreover from \eqref{eq:Fderiv12} their multipliers are
\begin{equation}\label{eq:nfFxsn}
g_y(y_{k}(n),n^2) = 1 + (-1)^{k+1} 2n+2an^2+O(n^3).
\end{equation}

\subsection{Step 3: multiplier equivalence}
The two equations that equate the multipliers of the corresponding fixed points of $f$ and $g$ are
\[
f_x(x_k(m),m^2) = g_y(y_{k}(n),n^2), \quad k=1,2.
\]
We wish to solve these equations to obtain $n=n(m)$ and $a=a(m)$. However, the Implicit Function Theorem
cannot be applied directly to these equations as the first derivatives vanish, so we use a standard trick
(e.g.~\cite{Iooss}) and set $n(m) = m p(m)$.
Let  
\[
K_k(a,p,m) = g_y \!\left( y_{k}(p m),(p m)^2 \right) - f_x(x_k(m),m^2), \quad k=1,2.
\]
Then our problem is to solve these equations for solutions $a(m)$ and $p(m)$.
Yet the Implicit Function Theorem cannot be applied directly
to the equations $K_1=0$ and $K_2=0$, as these have a similar structure,
so we instead consider the combination $K_1=0$ and $K_1+K_2=0$.
Set 
\[
P(a,p,m)=\begin{cases}\frac{K_1(a,p,m)}{m} & \textrm{if}~m\ne 0,\\ \frac{\partial K_1}{\partial m}(a,p,m) & \textrm{if}~m=0.
\end{cases}
\]
Then from (\ref{eq:Fderiv12}) and (\ref{eq:nfFxsn})
\[
P(a,p,m)=2p-\sqrt{-2f_\mu f_{xx}} \,\big|_{(0,0)}
+\left( 2ap^2+\frac{2f_\mu f_{xxx}}{3f_{xx}} \middle) \right|_{(0,0)} m+O(m^2).
\]
Thus $P(a,p_0,0)=0$ if $p(0)=p_0$ with
\begin{equation}\label{eq:p0def}
p_0=\frac{1}{2}\sqrt{-2f_\mu f_{xx}} \,\big|_{(0,0)} \ne 0.
\end{equation}
Similarly write
\[
4Q(a,p,m)=\begin{cases}\frac{K_1+K_2}{m^2} &\textrm{if}~m\ne 0,\\
\frac{1}{2}\frac{\partial^2}{\partial m^2}(K_1+K_2) & \textrm{if}~m=0.
\end{cases}
\]
From (\ref{eq:Fderiv12}) and (\ref{eq:nfFxsn}),
\[
Q(a,p,m)=ap^2+\frac{f_\mu f_{xxx}}{3f_{xx}} \bigg|_{(0,0)} +O(m),
\]
and so $Q(a_0,p_0,0)=0$ if
\begin{equation}\label{eq:a0def}
a_0=-\frac{1}{p_0^2}\left(\frac{f_\mu f_{xxx}}{3f_{xx}} \middle) \right|_{(0,0)} =\frac{2f_{xxx}}{3f_{xx}^2} \bigg|_{(0,0)}.
\end{equation}

We now look for solutions to the pair $P(a,p,m)=0$ and $Q(a,p,m)=0$ through $(a_0,p_0)$.
This pair is at least $C^1$ because $K_1$ and $K_2$ are $C^{r-1}$,
so $P$ and $Q$ are $C^{r-2}$ and $C^{r-3}$ respectively,
and by assumption $r \ge 4$.
For the matrix of partial derivatives,
\[
\det \!\left(\begin{bmatrix}\frac{\partial P}{\partial a} & \frac{\partial P}{\partial p}\\ \frac{\partial Q}{\partial a} & \frac{\partial Q}{\partial p}\end{bmatrix}\right) = -2p_0^2\ne 0,
\] 
so the Implicit Function Theorem can indeed be applied
resulting in unique local $C^{r-3}$ solutions $a(m)$ and $p(m)$ with $a(0)=a_0$ and $p(0)=p_0$. 

\subsection{Step 4: differentiable conjugacies}
The previous subsections establish the conditions that allow Belitskii's Theorem (Theorem~\ref{thm:difffp}) to be applied directly to give the following corollary if $\mu >0$. 

\begin{corollary}There exist $X_0>0$ and $M_0>0$ such that if $0<\mu <M_0$ and $x\in [-X_0,X_0]$ then the map $x_{n+1}=f(x_n,\mu )$ with $f$ being $C^r$, $r\ge 4$, satisfying (\ref{eq:nsn}) and (\ref{eq:sngen}) has a stable fixed point $x_1(\mu )$ and an unstable fixed point $x_2(\mu )$ in $[-X_0,X_0]$. Moreover there exists a $C^{r-2}$ change of parameter $\nu (\mu )$ and a $C^{r-2}$ function $a(\mu )$ such that the map is differentiably conjugate to $y_{n+1}= \nu (\mu ) +y_n-y_n^2+a(\mu )y_n^3$ on the basin of attraction of $x_1$ in $[-X_0,X_0]$ and the basin of attraction of the corresponding fixed point of the normal form on a suitably chosen interval $[-Y_0,Y_0]$, and another differentiable conjugacy on the basin of repulsion of $x_2$.
\label{cor:sn}
\end{corollary}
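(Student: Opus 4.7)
The plan is to assemble the results of Subsections 4.1--4.3. Lemma~\ref{le:snFixedPoints} together with Subsection 4.2 already supply, for $\mu = m^2 > 0$ in a small neighbourhood of $0$, the two fixed points $x_1(m) < x_2(m)$ of $f$ in some $[-X_0,X_0]$ and the corresponding fixed points $y_1(n) < y_2(n)$ of $g(\cdot,\nu,a)$ in some $[-Y_0,Y_0]$, along with the expansions (\ref{eq:Fderiv12}) and (\ref{eq:nfFxsn}) of their multipliers. Subsection 4.3 produces $C^{r-3}$ functions $p(m)$ and $a(m)$ with $p(0) = p_0 > 0$ and $a(0) = a_0$ solving the multiplier-equivalence system $P = 0$, $Q = 0$ via the Implicit Function Theorem. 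Setting $\nu(\mu) = \mu\, p(\sqrt{\mu})^{2}$ and viewing $a$ as a function of $\mu$ through $m = \sqrt{\mu}$ gives the parameter change required by the corollary; the regularity in $\mu$ is then read directly from the IFT output of Step 3.

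Next I would verify the hypotheses of Theorem~\ref{thm:difffp}. By continuity of $f_x$ and $g_y$ near $(0,0)$, both $f(\cdot,\mu)$ and $\tilde{g}(\cdot,\mu) = g(\cdot,\nu(\mu),a(\mu))$ are strictly increasing $C^r$ diffeomorphisms on $[-X_0,X_0]$ and $[-Y_0,Y_0]$ respectively, once $X_0$, $Y_0$, and $M_0$ are chosen small enough. By Lemma~\ref{le:snFixedPoints} and the analogous calculation for $g$, each map has exactly two fixed points in these intervals, and by Subsection 4.3 the multipliers at corresponding fixed points agree. From (\ref{eq:Fderiv12}) these common multipliers are of the form $1 + (-1)^{k+1}\sqrt{-2 f_\mu f_{xx}}\,\big|_{(0,0)}\,m + O(m^2)$, which is strictly different from $1$ for every $0 < m < \sqrt{M_0}$ after a further shrinking of $M_0$, so the hypothesis $f'(x_k) = g'(y_k) \ne 1$ is genuinely met for both fixed points.

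The main obstacle is that Theorem~\ref{thm:difffp} is stated for maps defined on all of $\mathbb{R}$ with exactly $n$ fixed points globally, whereas $f(\cdot,\mu)$ and $\tilde{g}(\cdot,\mu)$ are only defined (or relevant) on small intervals. I would resolve this by extending each of the two maps to a strictly increasing $C^r$ diffeomorphism of $\mathbb{R}$ having no additional fixed points and with $x \mapsto f(x) - x$ of a prescribed constant sign on the complementary unbounded components; a standard $C^r$ bump-function gluing with an affine tail of the form $x \mapsto x + c$ (for suitable $c \ne 0$) does the job, and the same construction for $\tilde{g}$ produces matching signs outside $[-Y_0,Y_0]$ because the multiplier-equivalence already fixes the qualitative shape of the two maps to coincide on each side of the fixed-point pair.

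Finally, I would apply Theorem~\ref{thm:difffp} to the two extended maps. For $k = 1$ it yields a $C^{r-1}$-conjugacy on $(x_0,x_2) = (-\infty,x_2)$, which restricted to $[-X_0,X_0]$ is a $C^{r-1}$-conjugacy on the basin of attraction of $x_1$ in $[-X_0,X_0]$ with the corresponding interval for $\tilde{g}$ in $[-Y_0,Y_0]$; for $k = 2$ it yields the analogous $C^{r-1}$-conjugacy on the basin of repulsion of $x_2$. Since the behaviour of the extensions outside the original intervals affects neither the fixed points nor the local dynamics on the overlapping basins, the resulting local conjugacies are intrinsic to $f|_{[-X_0,X_0]}$ and $\tilde{g}|_{[-Y_0,Y_0]}$, which is exactly the content of the corollary.
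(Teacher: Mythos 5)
Your proposal is correct and follows essentially the same route as the paper: the paper's proof of Corollary~\ref{cor:sn} is simply the observation that Steps 1--3 (skeleton, normal-form skeleton, and multiplier matching via the Implicit Function Theorem) put $f$ and $\tilde g$ in a position where Belitskii's Theorem (Theorem~\ref{thm:difffp}) applies directly to give the conjugacies on the basins of attraction and repulsion. Your additional step of extending both maps to increasing $C^r$ diffeomorphisms of $\mathbb{R}$ with no extra fixed points, so that the global hypotheses of Theorem~\ref{thm:difffp} are literally satisfied, is a sensible filling-in of a detail the paper leaves implicit.
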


Now we need to consider the case $\mu <0$. The issue here is the number of iterates the right hand end-point of the neighbourhood of $x=0$ takes to leave the neighbourhood. 

\begin{lemma}Suppose that the conditions on $f$ of Corollary~\ref{cor:sn} hold. Then there exist $X_0>0$, $N>0$ and $M_1>0$ such that $f^N(X_0,-M_1)=-X_0$ and there exists a unique increasing sequence $\mu_n\in (-M_1,0)$ such that $f^n(X_0,\mu_n)=-X_0$ with $\mu_n\to 0$ as $n\to \infty$. Let $Y_0$, $\mathcal{V}_1$ and $\nu_n$ be equivalent quantities for the normal form map (\ref{eq:snfnf}). Then there exists a $C^\infty$ function $\nu : [-M_1,0)\to [-\mathcal{V}_1,0)$ with $\nu (\mu_n)=\nu_n$ for all $n>N$ such that the normal form with $a(\mu )=a_0$ defined by (\ref{eq:a0def}) on $[-Y_0,Y_0]$ is differentiably conjugate to $f$ on $[-X_0,X_0]$.    
\end{lemma}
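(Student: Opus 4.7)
The plan is to proceed in two main stages: (I) establish the sequences $\mu_n, \nu_n$ and use them to define the $C^\infty$ function $\nu$; (II) for each $\mu \in [-M_1,0)$ construct the differentiable conjugacy between $f(\cdot,\mu)$ on $[-X_0,X_0]$ and the normal form with parameter $\nu(\mu)$ on $[-Y_0,Y_0]$, using an orbit-matching construction in the spirit of Theorems \ref{thm:mono} and \ref{thm:diffnofp}.

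For stage (I), first I would choose $X_0>0$ small so that both $f(\cdot,\mu)$ and the normal form are increasing $C^r$ diffeomorphisms with no fixed points and satisfying $f(x,\mu)<x$ throughout $[-X_0,X_0]$ for $\mu\in(-M_1,0)$ (and analogously for the normal form on $[-Y_0,Y_0]$), which is possible by (\ref{eq:sngen}) together with continuity. A routine induction based on $f_\mu>0$ and $f_x>0$ shows that $\mu\mapsto f^n(X_0,\mu)$ is strictly increasing in $\mu$. At $\mu=0$ the orbit $\{f^k(X_0,0)\}$ decreases to the fixed point $0>-X_0$, while for $\mu=-M_1$ (with an appropriate $M_1$) some iterate reaches $-X_0$. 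By the Intermediate Value Theorem and monotonicity, for each $n\ge N$ there is a unique $\mu_n\in(-M_1,0)$ with $f^n(X_0,\mu_n)=-X_0$; the sequence is strictly increasing, and $\mu_n\to 0^-$ since for any $\mu<0$ the orbit of $X_0$ must exit the interval in finitely many steps (as $\inf_{x\in[-X_0,X_0]}[x-f(x,\mu)]>0$). The same argument for the normal form yields $\nu_n\to 0^-$. Then $\nu$ is built by interpolation: on each $[\mu_n,\mu_{n+1}]$ take a $C^\infty$ strictly increasing diffeomorphism onto $[\nu_n,\nu_{n+1}]$ with all derivatives vanishing at the endpoints (standard bump-function construction), and piece them together to obtain a $C^\infty$ strictly increasing $\nu:[-M_1,0)\to[-\mathcal{V}_1,0)$.

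For stage (II), fix $\mu\in[-M_1,0)$ and set $\nu=\nu(\mu)$. The two maps are increasing $C^r$ diffeomorphisms with no fixed points on $[-X_0,X_0]$ and $[-Y_0,Y_0]$ and with $f(x,\mu)-x$ and $g(y,\nu,a_0)-y$ of the same sign. On the fundamental domain $[f(X_0,\mu),X_0]$ define a $C^\infty$ strictly increasing diffeomorphism onto $[g(Y_0,\nu,a_0),Y_0]$, subject to the single interior constraint $f^{-n}(-X_0)\mapsto g^{-n}(-Y_0)$ when $\mu\in[\mu_n,\mu_{n+1}]$ (a condition that merely prescribes the value of the initial diffeomorphism at one interior point). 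Propagating via $h_\mu\circ f=g\circ h_\mu$, exactly as in the proof of Theorem \ref{thm:diffnofp}, yields a differentiable conjugacy on $[-X_0,X_0]$. When $\mu=\mu_n$ the orbits of $X_0$ and $Y_0$ hit $-X_0$ and $-Y_0$ in exactly $n$ steps so the boundary is preserved automatically; for intermediate $\mu$, the interior constraint forces $h_\mu(-X_0)=-Y_0$. Differentiability at the orbit points $f^k(X_0,\mu)$ is obtained by the $C^\infty$ jump-function argument used in Theorem \ref{thm:diffnofp}.

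The hardest part is the coordination between the interpolated $\nu(\mu)$ and the boundary-preserving requirement on the conjugacy. At $\mu=\mu_n$ the orbit of $X_0$ lands exactly on $-X_0$, giving a clean match; for other $\mu$ the orbit crosses $-X_0$ at a non-integer travel time, and the role of $\nu(\mu)$ together with the interior constraint on the initial diffeomorphism is to synchronize this fractional travel time with the corresponding orbit of the normal form. Verifying that the interior constraint is compatible with smoothness and the endpoint conditions on the fundamental domain, and that the resulting family $\{h_\mu\}$ inherits enough regularity from the $C^\infty$ function $\nu$ and the $C^r$ maps $f, g$, is the main technical content beyond the routine ingredients already developed in \S\ref{sect:basics}.
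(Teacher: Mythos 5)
Your proposal is correct and follows essentially the same route as the paper: monotonicity of $f^n(X_0,\mu)$ in $\mu$ plus the intermediate value theorem to produce the sequences $\mu_n$ and $\nu_n$, a $C^\infty$ interpolation with $\nu(\mu_n)=\nu_n$, and, for each $\mu$, the fundamental-domain push-forward construction of Theorem~\ref{thm:diffnofp} with initial intervals $(f(X_0),X_0)$ and $(g(Y_0),Y_0)$. Your explicit interior constraint $f^{-n}(-X_0)\mapsto g^{-n}(-Y_0)$ merely spells out the endpoint matching that the paper leaves implicit when it asserts the conjugacy holds for any $\nu\in(\nu_n,\nu_{n+1})$.
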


\begin{proof}
First note that $X_0$ and $M_1$ can be chosen so that $f_\mu (x,\mu)$ and $f_x(x,\mu )$ are both positive and $f(x,\mu )<x$ for all $(x, \mu)\in [-X_0,X_0]\times [-M_1,0)$. Thus by the mean value theorem 
\begin{equation}\label{eq:moneq}
\begin{array}{rll}
f(x,\tilde \mu_1)&<f(x,\tilde \mu_2) & \textrm{if}~\tilde\mu_1<\tilde\mu_2;\\
f(x_1,\mu )& <f(x_2,\mu ) & \textrm{if}~x_1<x_2; \ \ \textrm{and\ so}\\
f^n(x,\tilde \mu_1)& <f^n(x,\tilde \mu_2) & \textrm{if}~\tilde\mu_1<\tilde\mu_2, \ \ n\ge 1.
\end{array}
\end{equation}
These imply that there exists $N>0$ such that $f^N(X_0,-M_1)\le -X_1$. If the inequality is strict, then $f^N(X_0,0)>0$ and the intermediate value theorem imply that there exists $\mu_N>-M_1$ such that $f^N(X_0,\mu_N)=-X_0$. Redefining $M_1$ by $\mu_N=-M_1$ implies that the statement about $M_1$ is true. The existence of the values $\mu_n$, $n>N$ follow by a similar argument, using the intermediate value theorem inductively to show that  $f^{n+1}(X_0,\mu_n)<-X_0$ and $f^{n+1}(X_0,0)>0>-X_0$ implies that there exists $\mu_{n+1}>\mu_n$ such that $f^{n+1}(X_0,\mu_{n+1})=-X_0$. The uniqueness follows from the third inequality of (\ref{eq:moneq}).

Of course, the analogous statements hold equally for the saddle-node normal form map on $[-Y_0,Y_0]$, so for each $\mu\in (\mu_n,\mu_{n+1})$ the map is differentiably conjugate to the saddle-node map for any $\nu\in (\nu_n,\nu_{n+1})$ using the construction of the sketch proof of Theorem~\ref{thm:diffnofp} with initial intervals $(f(X_0),X_0)$ and $(g(Y_0),Y_0)$.

Now let $\nu:[-M_1,0)\to [-\mathcal{V}_1,0)$ be any $C^\infty$ function such that $\nu (\mu_n)=\nu_n$ and note that $\lim_{\mu \uparrow 0} \nu (\mu ) =0$. For each $\mu$ in $[-M_1,0)$ the map $f$ is differentiably conjugate to the normal form with parameters $\nu (\mu )$ and $a_0$.
\end{proof}
  
If $\mu=0$ then the map is differentiably conjugate to the normal form map by standard results for non-hyperbolic fixed points, Theorem~\ref{thm:takens}.

\section{The pitchfork bifurcation}
\label{sect:pitchfork}
\setcounter{equation}{0}

As in the case of the transcritical bifurcation
we make the simplifying assumption \eqref{eq:originAlwaysFixed}
that $x=0$ is a fixed point of $f$ for all sufficiently small values of $\mu$.
This enables us to write
\begin{equation}\label{eq:pf}
x_{n+1}=f(x_n,\mu)=x_n+x_nK(x_n,\mu)
\end{equation}
with
\begin{equation}\label{eq:defK}
K(0,0) = K_x(0,0) = 0, \quad K_\mu(0,0) > 0, \quad K_{xx}(0,0) < 0.
\end{equation}
These are equivalent to the constraints \eqref{eq:pff} on $f$
and assume a change in the sign of $\mu$ has possibly been applied.
(Note the analysis for the subcritical pitchfork bifurcation is essentially the same,
but in that case $f_{x\mu} f_{xxx}>0$.) 

\subsection{Step 1: the standard skeleton}
Fixed points of (\ref{eq:pf}) are $x=0$ and solutions to $K(x,\mu)=0$.
The function $K(x,\mu)$ satisfies the same conditions
as $f(x,\mu) - x$ in the saddle-node bifurcation setting
except has one less degree of differentiability.
So for $\mu > 0$ we set $\mu = m^2$ and in Lemma \ref{le:snFixedPoints}
replace derivatives of $f$ with those of $K$ to obtain $C^{r-2}$ non-trivial fixed points
\begin{equation}\label{eq:K12}
x_{k}(m)=(-1)^k \sqrt{\frac{-2K_\mu}{K_{xx}}} \Bigg|_{(0,0)} m
+\left(\frac{K_\mu K_{xxx}-3K_{x\mu}K_{xx}}{3K_{xx}^2}\middle) \right|_{(0,0)} m^2+O(m^3),
\end{equation}
for $k \in \{1,2\}$.
The cubic term in \eqref{eq:K12} will be needed below
and a derivation of its coefficient is outlined in Appendix \ref{app:pf}.
The multiplier of the trivial fixed point $x=0$ is
\begin{equation}\label{eq:trivmultpf}
f_x(0,\mu) = 1+K(0,\mu) =1+f_{x\mu}(0,0) \mu+\frac{1}{2}f_{x\mu\mu}(0,0) \mu^2+O(\mu^3).
\end{equation}
Similarly the multipliers of $x_k(m)$ are
\[
f_x(x_k(m),m^2) = 1+x_k(m) K_x(x_k(m),m^2),
\]
since $K(x_k(m),m^2)$ is identically zero by definition.
By using (\ref{eq:K12}) and re-expressing the multipliers in terms of the derivatives of $f$,
\begin{equation}\label{eq:xkmultpf}
f_x(x_k(m),m^2) = 1 - 2 f_{x\mu}(0,0) m^2 +(-1)^k B m^3+C m^4+O(m^5),
\end{equation}
where 
\begin{equation}\label{eq:B}
B=\sqrt{\frac{-3f_{\mu x}}{8f^3_{xxx}}} (f_{x\mu} f_{xxxx} + 2f_{xxx} f_{xx\mu}) \Big|_{(0,0)},
\end{equation}
and $C$ is a function of the derivatives of $f$ given in Appendix \ref{app:pf}.

\subsection{Step 2: fixed points of the normal form}
The corresponding normal form $g$, given by \eqref{eq:pffnf},
has trivial fixed point $y=0$ with multiplier
\begin{equation}\label{eq:y0mult}
g_y(0,\nu) = 1+\nu.
\end{equation}
For the non-trivial fixed points of $g$,
we assume $\nu > 0$ and set $\nu = n^2$, then apply the formula \eqref{eq:K12} to $g$ to obtain
\begin{equation}\label{eq:ynontpf}
y_{k}(n)=(-1)^k n+\frac{b}{2} n^2+O(n^3) .
\end{equation}
It should not come as a surprise that we need coefficents on two nonlinear terms.
This is because there are three fixed points if $\nu >0$ and so three multipliers need to be matched.
We achieve this by solving for $\nu$, $a$, and $b$ in the matching equations.
Possibly the surprising choice for our normal form is $y_n^5$ rather than say $y_n^4$.
This is because if a $y_n^4$ term is included then the equations for the coefficients at $\mu=0$
are quadratic rather than linear.
By separating out the orders of the terms in the normal form we can solve for $b_0$ at order $m^3$
and then for $a_0$ at $m^4$ without needing to solve nonlinear equations.
This does mean however that the fixed points need to be computed to third order.
As mentioned above this calculation is outlined in Appendix \ref{app:pf}.
The result for the normal form is  
\begin{equation}\label{eq:pfy12}
y_k(n) = (-1)^kn+\frac{b}{2} n^2 +(-1)^k\frac{1}{8}(4a +b^2)n^3+O(n^4),
\end{equation} 
and the multipliers are 
\begin{equation}\label{eq:nontrivmultpf}
g_y(y_k(n),n^2) 
= 1-2n^2 +(-1)^{k+1}bn^3+ \frac{1}{2} (4 a - b^2)n^4+O(n^5).
\end{equation}

\subsection{Step 3: multiplier equivalence}
Equating the multipliers \eqref{eq:trivmultpf} and \eqref{eq:y0mult}
of the trivial fixed points gives a simple relationship for $\nu$ as  function of $\mu$:
\begin{equation}
\nu = K(0,\mu) = f_{x\mu}(0,0) \mu+\frac{1}{2}f_{x\mu\mu}(0,0) \mu^2+O(\mu^3).
\label{eq:nupf}
\end{equation}
By substituting this into \eqref{eq:nontrivmultpf}
we eliminate $n$ so now the multipliers of the non-trivial fixed points of $g$ are
\begin{align}
D_k(a,b,m) &= 1 - 2 f_{x\mu}(0,0) m^2 + (-1)^{k+1} b f_{x\mu}(0,0)^{\frac{3}{2}} m^3 \nonumber \\
&\quad+\left( \frac{1}{2} (4 a - b^2) f_{x\mu}(0,0)^2 - f_{x\mu\mu}(0,0) \right) m^4 + O(m^5).
\label{eq:Dpf}
\end{align}
To equate the multipliers at corresponding fixed points we match (\ref{eq:xkmultpf}) and (\ref{eq:Dpf})
for each $k \in \{1,2\}$.
That is, we seek zeros of
\begin{align}
J_k(a,b,m) &= D_k(a,b,m) - f_x(x_k(m),m^2) \nonumber \\
&= (-1)^{k+1} \left( b f_{x\mu}(0,0)^{\frac{3}{2}} + B \right) m^3 \nonumber \\
&\quad+ \left( \frac{1}{2} (4 a - b^2) f_{x\mu}(0,0)^2 - f_{x\mu\mu}(0,0) - C \right) m^4 + O(m^5),
\label{eq:Jpf}
\end{align}
for $k \in \{1,2\}$.
To use the Implicit Function Theorem we will adopt the strategy in \S\ref{sect:saddlenode} and define 
\[
R(a,p,m)=\begin{cases}\frac{J_1}{m^3} &\textrm{if}~m\ne 0,\\
\frac{1}{3!}\frac{\partial^3J_1}{\partial m^3} & \textrm{if}~m=0,
\end{cases}
\]
and
\[
S(a,p,m)=\begin{cases}\frac{J_1+J_2}{m^4} &\textrm{if}~m\ne 0,\\
\frac{1}{4!}\frac{\partial^4}{\partial m^4}(J_1+J_2) & \textrm{if}~m=0.
\end{cases}
\]
By \eqref{eq:Jpf}, 
\begin{equation}
\begin{split}
R(a,b,m) &= b f_{x\mu}(0,0)^\frac{3}{2} - B + O(m), \\
S(a,b,m) &= (4 a - b^2) f_{x\mu}(0,0)^2 - 2 f_{x\mu\mu}(0,0) - 2 C + O(m),
\end{split}
\label{eq:RSleading}
\end{equation}
and these are $C^{r-5}$ and $C^{r-6}$ respectively because \eqref{eq:K12} is $C^{r-2}$.
We can solve $R(a_0,b_0,0) = S(a_0,b_0,0) = 0$ for $a_0$ and $b_0$
by using \eqref{eq:RSleading}
and our formulas for $B$ \eqref{eq:B} and $C$ (given in Appendix \ref{app:pf}).
The result is that $a_0$ and $b_0$ are given by the values in \eqref{eq:pfFormulae}.
At $(a_0,b_0,0)$,
\[
\det \!\left( \begin{bmatrix}
\frac{\partial R}{\partial a} &
\frac{\partial R}{\partial b} \\
\frac{\partial Q}{\partial a} &
\frac{\partial Q}{\partial b}
\end{bmatrix} \right)
= -4 f_{x\mu}(0,0)^{\frac{7}{2}} \ne 0,
\]
so by the Implicit Function Theorem there exist locally unique $C^{r-6}$ functions
$a(m)$ and $b(m)$ with $a(0)=a_0$ and $b(0)=b_0$ such that if $\mu >0$ then $R(a(m),b(m),m)=S(a(m),b(m),m)=0$.
With this the corresponding fixed points of $f$ and $g$
have the same multipliers for sufficiently small values of $\mu > 0$.

\subsection{Step 4: differentiable conjugacies}
Since the multipliers are the same,
the $\mu > 0$ part of Theorem \ref{thm:pfb} follows from Theorem \ref{thm:difffp}.
For $\mu < 0$ we only need to impose \eqref{eq:nupf} to match the multipliers of the trivial fixed points
and this case too follows from Theorem \ref{thm:difffp}.
Finally the $\mu = 0$ part of Theorem \ref{thm:pfb}
follows from Theorem~\ref{thm:takens} with $p=3$.

\section{The period-doubling bifurcation}
\label{sect:periodDoubling}
\setcounter{equation}{0}

Period-doubling bifurcations occur as a multiplier of a fixed point (possibly of an iterate of a map) passes through $-1$,
so restricted to the one-dimensional centre manifold we have a decreasing map.
The only periodic orbits of a decreasing map have periods one or two, and there is at most one fixed point.
Within an open interval that maps into itself there is precisely one fixed point. 

If $f$ is a decreasing map with a fixed point, then $f^2$ is an increasing map and so the results of \S\ref{sect:basics} can be applied to the second iterate.
Monotonicity implies that fixed points of $f^2$ which are not fixed points of $f$ come in pairs which are images of each other under $f$,
and if there are $n$ hyperbolic orbits of period two then these are ordered so that
\[
z_{-n}< \dots  < z_{-1}<  z_0 < z_1 <\dots <z_n
\]
with $f(z_{-k})=z_k$, $k=-n, \dots ,n$. As in \S\ref{sect:basics} we write $z_{-(n+1)}=-\infty$,   $z_{n+1}=\infty$, and $U_k=(z_{k-1},z_{k+1})$, $k=-n, \dots ,n$. The monotonic nature of $f$ implies that $f(U_0)=U_0$, $f(U_k)=U_{-k}$, $0<|k|<n$, and $f(U_{-(n+1)})\subseteq U_{n+1}$ and $f(U_{n+1})\subseteq U_{-(n+1)}$. Belitskii's Theorem (Theorem~\ref{thm:difffp}) implies that for any two maps $f$ and $g$ with the same structure of periodic points, multipliers of $f^2$ at the periodic points and sign of $f^2(x)-x$ on each corresponding interval $U_k$, the second iterates $f^2$ and $g^2$ are smoothly conjugate on $U_k$. The first question for the period-doubling bifurcation is therefore to determine whether a differentiable conjugacy between $f^2$ and $g^2$ implies a differentiable conjugacy between $f$ and $g$. This question has been answered in detail in \cite{Ofarrell2009} for more general maps allowing the fixed points of $f^2$ to be non-hyperbolic (and even non-isolated). 

We will start with some preliminary remarks. First note that if $f^2$ and $g^2$ are differentiably conjugate on some domains then
\[
f^2=h^{-1}\circ g^2 \circ h =  h^{-1}\circ g \circ (h\circ h^{-1})\circ g \circ h = (h^{-1}\circ g \circ h)^2
\]
in other words $f^2=w^2$ where $w=h^{-1}\circ g \circ h$ is differentiably conjugate to $g$. This function $w$ can be used to show that $g$ and $f$ are differentiably conjugate.

\begin{theorem}Suppose that $f$ and $g$ are $C^r$ strictly decreasing functions, $r\ge 1$, and $f^2$ and $g^2$ satisfy the conditions of Theorem~\ref{thm:difffp}. Let $U_k$ be the intervals $(z_{k-1},z_{k+1})$ defined above and $V_k$ the corresponding intervals for $g$. Then if $1\le k\le n$ there exists $\hat{h}:U_{-k}\cup U_k \to V_{-k}\cup V_k$ which is $C^r$ on each component such that $\hat{h} \circ f=g\circ \hat{h}$, i.e. $f$ and $g$ are $C^r$ conjugate on corresponding pairs of intervals. Moreover, $f$ and $g$ are also $C^r$ conjugate on $U_0$.\label{thm:h1}\end{theorem}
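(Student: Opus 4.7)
The plan is to obtain $\hat h$ from the conjugacy for $f^2$ and $g^2$ provided by Belitskii's Theorem~\ref{thm:difffp}, and transfer it to $f,g$ using that $f$ is strictly decreasing and $f(z_j)=z_{-j}$. Fix $k$ with $1\le k\le n$. Since $z_k$ is an interior hyperbolic fixed point of $f^2$ in $U_k$, Theorem~\ref{thm:difffp} supplies a conjugacy $h_k:U_k\to V_k$ with $h_k\circ f^2=g^2\circ h_k$ and $h_k(z_k)=v_k$. Using $f(U_{-k})=U_k$ and $g(V_{-k})=V_k$, I would set
\[
\hat h(x)=\begin{cases}h_k(x),& x\in U_k,\\ (g^{-1}\circ h_k\circ f)(x),& x\in U_{-k}.\end{cases}
\]
Two one-line checks give $\hat h\circ f=g\circ\hat h$: on $U_{-k}$ it is immediate from the definition; on $U_k$ one uses $h_k\circ f^2=g^2\circ h_k$ to rewrite $\hat h(f(x))=g^{-1}(g^2(h_k(x)))=g(\hat h(x))$. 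The smoothness of $\hat h$ on each component is inherited from $h_k$, $f$, and $g^{-1}$, and no gluing is needed since $U_{-k}$ and $U_k$ are disjoint.

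On $U_0$ the same idea is used, but the gluing is subtler because $f(U_0)=U_0$ with $f$ exchanging the two halves $U_0^\pm=U_0\cap\{\pm(x-z_0)>0\}$, so $\hat h_0$ must be defined on the single connected interval $U_0$. I would apply Theorem~\ref{thm:difffp} to $f^2,g^2$ on $U_0$ to get $h_0:U_0\to V_0$ with $h_0(z_0)=v_0$, and set
\[
\hat h_0(x)=\begin{cases}h_0(x),& x\in U_0^+\cup\{z_0\},\\ (g^{-1}\circ h_0\circ f)(x),& x\in U_0^-.\end{cases}
\]
The conjugacy identity $\hat h_0\circ f=g\circ\hat h_0$ again follows by the same two-case computation. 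The remaining question is whether the two branches glue to a function of the required regularity at $z_0$.

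This gluing is what I expect to be the main technical obstacle. Set $H=g^{-1}\circ h_0\circ f$, so $\hat h_0$ equals $h_0$ on $U_0^+$ and $H$ on $U_0^-$. A short calculation from $h_0\circ f^2=g^2\circ h_0$ gives $H\circ f^2=g^2\circ H$, i.e.\ $H$ is itself a conjugacy of $f^2$ to $g^2$ on $U_0$ with $H(z_0)=v_0$. The matching-multiplier hypothesis $(f^2)'(z_0)=(g^2)'(v_0)$ together with $f,g$ strictly decreasing forces $f'(z_0)=g'(v_0)=:\lambda$, and the chain rule then yields $H'(z_0)=h_0'(z_0)$. Because $\lambda^2\ne 1$ (hyperbolicity), the non-resonance condition $\lambda^{2n}\ne\lambda^2$ holds for every $n\ge 2$, so differentiating $h_0\circ f^2=g^2\circ h_0$ repeatedly at $z_0$ produces a triangular recursion that determines $h_0^{(n)}(z_0)$ uniquely from $h_0(z_0)$ and $h_0'(z_0)$ through order $r-1$; the same recursion applied to $H\circ f^2=g^2\circ H$ yields the same values. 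Consequently the Taylor polynomials of $h_0$ and $H$ at $z_0$ coincide up to order $r-1$, and $\hat h_0$ is smooth at $z_0$ of the same order as the conjugacy supplied by Belitskii. An alternative that bypasses the explicit gluing is to invoke directly the extension of Belitskii's theorem to strictly decreasing maps developed in \cite{Ofarrell2009}.
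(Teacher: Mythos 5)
Your construction is correct and is essentially the paper's own proof: on $U_{-k}\cup U_k$ your $\hat h$ (the Belitskii conjugacy for $f^2,g^2$ on $U_k$, transported to $U_{-k}$ as $g^{-1}\circ h_k\circ f$) coincides with the paper's composite conjugacy obtained from $w=h^{-1}\circ g\circ h$ and $h_1$, and the verification $\hat h\circ f=g\circ\hat h$ is the same two-case computation. For $U_0$ the paper delegates the gluing at the fixed point to \cite{Ofarrell2009}, and your jet-matching argument (using $f'(z_0)=g'(v_0)$ from equal multipliers of $f^2,g^2$ plus monotonicity, and the non-resonance $\lambda^{2(n-1)}\ne 1$ to run the triangular recursion for the derivatives of $h_0$ and $H=g^{-1}\circ h_0\circ f$ at $z_0$) is a correct, self-contained rendering of exactly the ``equal up to $C^r$ terms at the origin'' technicality the paper cites, subject only to the same smoothness bookkeeping ($C^{r-1}$ versus $C^r$) that the paper itself glosses over.
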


\begin{proof} Fix $k>0$. Let $h:U_{-k}\cup U_k \to V_{-k}\cup V_k$ be the $C^r$ conjugacy between $f^2$ and $g^2$ of Belitskii's Theorem (Theorem~\ref{thm:difffp}) and let $w=h^{-1}\circ g \circ h$. Then, as noted above, $f^2=w^2$. Define $h_1:U_{-k}\cup U_k \to U_{-k}\cup U_k$ by
\[ h_1(x)=x, \ \ \ x\in U_k, \quad h_1(x)=w\circ f^{-1}(x), \ \ \ x\in U_{-k}.
\]
Note that $f^{-1}:U_{-k}\to U_k$ and $w:U_k\to U_{-k}$ so $h_1(U_k)=U_k$ and $h_1(U_{-k})=U_{-k}$. Then since $f(x)\in U_{-k}$ if $x\in U_k$ and $f(x)\in U_{k}$ if $x\in U_{-k}$ ($k>0$) then 
\[  \textrm{if} \ x\in U_{-k} \ \ \textrm{then}\ \ h_1\circ f(x)=f(x) \in U_k,  \quad \textrm{and}  \]
\[\quad \textrm{if} \ x\in U_{k}\ \ \textrm{then}\ \  h_1\circ f(x)=w\circ f^{-1}\circ f(x)=w(x)\in U_{-k}.\]
Similarly (remember $k>0$)
\[
\textrm{if} \ x\in U_{k} \ \ \textrm{then}\ \ w\circ h_1(x)=w(x)\in U_{-k},  \quad \textrm{and} \]
\[
\textrm{if} \ x\in U_{-k} \ \ \textrm{then}\ \ w\circ h_1(x)=w^2\circ f^{-1}(x)=f^2\circ f^{-1}(x)=f(x)\in U_k.
\]
Hence $w\circ h_1=h_1\circ f$. Thus $f$ is $C^r$ conjugate to $w$ which is $C^r$ conjugate to $g$.

In the case of the remaining interval, $W_0$ which contains the fixed point $z_0$ of $f$ there is an added complication: the conjugating function $h_1$ is defined on $W_0\cap\{x\ge z_0\}$ and $W_0\cap\{x<z_0\}$ but the $C^r$ differentiability needs to extend to the point $z_0$ itself. This involves a further technicality described in detail in \cite{Ofarrell2009}: the functions $x$ and $w\circ f^{-1}$ to be equal up to $C^{r}$ terms at the origin, which is enough to imply that $f$ and $w$ are $C^r$ conjugate. Details of this part of the proof can be found in \cite{Ofarrell2009} and we will not go through the argument here.
\end{proof}
 
Having established the foundations of decreasing maps,
we can move on to the period-doubling bifurcation.
As above we assume $x=0$ is a fixed point for all small $\mu$ \eqref{eq:originAlwaysFixed}.
This allows us to write
\begin{equation}\label{eq:pd}
x_{n+1}=f(x_n,\mu)=-x_n + x_n P(x_n,\mu).
\end{equation}
For a period-doubling bifurcation we require
\begin{equation}\label{eq:pdgeneric}
f_x(0,0) = -1, \quad \left( 3 f_{xx}^2 + 2 f_{xxx} \middle) \right|_{(0,0)} \ne 0, \quad f_{x\mu}(0,0) \ne 0.
\end{equation}
We shall choose to work with maps $f$ giving a supercritical period-doubling bifurcation
and non-trivial fixed points existing for $\mu > 0$ which means
\begin{equation}\label{eq:pdpars}
\left( 3 f_{xx}^2 + 2 f_{xxx} \middle) \right|_{(0,0)} > 0, \quad f_{x\mu}(0,0) < 0,
\end{equation}
as in \eqref{eq:pdf}.
As with the pitchfork bifurcation, the subcritical case can be treated almost identically.

\subsection{Step 1: the standard skeleton}
The trivial fixed point $x=0$ has multiplier
\begin{equation}
f_x(0,\mu) = -1 + f_{x\mu}(0,0) \mu + \frac{1}{2} f_{x\mu\mu}(0,0) \mu^2 + O(\mu^3).
\label{eq:pdTrivialMult}
\end{equation}
The second iterate $f^2$ is increasing, at least locally.
From \eqref{eq:pd} we can write
\[
x_{n+2}=x_n+x_nG(x_n,\mu),
\]
where
\[
G(x,\mu) = -P(x,\mu) + (-1 + P(x,\mu)) P \big( -x + x P(x,\mu), \mu \big).
\]
From this formula we find that
\begin{equation}
\begin{split}
G_{xx}(0,0) &= -\frac{1}{3} \left( 3 f_{xx}^2 + 2 f_{xxx} \middle) \right|_{(0,0)} < 0, \\
G_\mu(0,0) &= -2 f_{x \mu}(0,0) > 0.
\end{split}
\label{eq:pdGderivs}
\end{equation}

Non-trivial fixed points of $f^2$ (these are period-two points of $f$) satisfy $G(x,\mu) = 0$.
In view of \eqref{eq:pdGderivs} the problem of solving for these fixed points
is the same as in the pitchfork case.
Thus $f^2$ for $\mu > 0$ has non-trivial fixed points $x_k(m)$, for $k \in \{1,2\}$ and with $\mu = m^2$,
given by \eqref{eq:K12}, but with $G$ instead of $K$.

The subsequent manipulations are now equivalent to those of \S\ref{sect:pitchfork}
although there are some interesting aspects of 
the iterated map in terms of dependence on $\mu$ rather than $m=\sqrt{\mu}$.
These are described in Appendix \ref{app:pd}. 

In particular the points $x_1(m)$ and $x_2(m)$ have the same multiplier
because they form a period-two solution for $f$.
For brevity we denote this multiplier $D(m)$.
This is a $C^{r-2}$ function of $m$ with first few terms
\begin{equation}\label{eq:dpd}
D(m) = 1 + 4 f_{x\mu}(0,0) m^2 + M m^4 + O(m^5),
\end{equation}
where a formula for $M$ is given in Appendix \ref{app:pd}.

\subsection{Step 2: periodic orbits of the normal form}
The normal form \eqref{eq:pdfnf} has trivial fixed point $y = 0$ with multiplier
\begin{equation}
g_y(0,\nu) = -1 - \nu.
\label{eq:pdTrivialMultg}
\end{equation}
The second iterate $g^2$ is close to the normal form of the (supercritical) pitchfork bifurcation.
For $\nu > 0$ it has two non-trivial fixed points $y_k(n)$, for $k \in \{1,2\}$ and with $\nu = n^2$.
By applying the general formula \eqref{eq:dpd} to the normal form
we find that the multiplier of each $y_k(n)$, write it as $d(a,n)$, is
\begin{equation}\label{eq:nontrivmultpd}
d(a,n) = 1 - 4 n^2 + 4(1-a) n^4 + O(n^5).
\end{equation}

\subsection{Step 3: multiplier equivalence}
By equating the multipliers \eqref{eq:pdTrivialMult} and \eqref{eq:pdTrivialMultg}
of the trivial fixed points of $f$ and $g$,
we obtain $\nu$ as a function of $\mu$ with
\begin{equation}\label{eq:0pdlead}
\nu = -f_{x\mu}(0,0) \mu - \frac{1}{2} f_{x\mu\mu}(0,0) \mu^2 + O(\mu^3),
\end{equation}
and recall $f_{x\mu}(0,0) < 0$ by assumption.

Next we equate the multipliers 
of the non-trivial fixed points of $f^2$ and $g^2$.
Already $n = n(m)$ via \eqref{eq:0pdlead}, so our task is to solve
$V(a,m) = d(a,n(m)) - D(m)$ for $a$ in terms of $m$.
By subtracting \eqref{eq:nontrivmultpd} from \eqref{eq:dpd} and using \eqref{eq:0pdlead},
\[
V(a,m) = \left( 4(1-a) f_{x\mu}(0,0)^2 + 2 f_{x\mu\mu}(0,0) - M \right) m^4 + O(m^5).
\]
Thus we define
\[
W(a,m)=\begin{cases}\frac{V(a,m)}{m^4} &\textrm{if}~m\ne 0,\\
\frac{1}{4!}\frac{\partial^4V}{\partial m^4} & \textrm{if}~m=0.
\end{cases}
\]
This function is $C^{r-6}$ and $W(a_0,0) = 0$ if
\begin{equation}
a_0 = 1 + \frac{2 f_{x\mu\mu}(0,0) - M}{4 f_{x\mu}(0,0)^2}.
\label{eq:pda0Again}
\end{equation}
Since $\frac{\partial W}{\partial a}(a_0,0) = -4 f_{x\mu}(0,0)^2 \ne 0$ and $r \ge 7$, the Implicit Function Theorem
implies the existence of a unique $C^{r-6}$ function $a(m)$, with $a(0) = a_0$,
such that $W(a(m),m) = 0$ for all sufficiently small values of $m$.

\subsection{Step 4: differentiable conjugacies}
We can now complete the proof of Theorem \ref{thm:pdb}.
For $\mu > 0$ we have identified functions $\nu(\mu)$ and $a(m)$ (which can be reinterpreted as a function of $\mu$)
such that, locally, the fixed points of $f^2$ and $g^2$ have the same multipliers.
Thus there is a differentiable conjugacy with the dynamics on the interval with the origin removed,
and on the interval between the points of period two by Belitskii's Theorem (Theorem~\ref{thm:difffp}) and Theorem~\ref{thm:h1}.
Moreover, the formulas \eqref{eq:pdFormulae} follow from \eqref{eq:0pdlead} and \eqref{eq:pda0Again}
where $M$ is given in Appendix \ref{app:pd}.

If $\mu < 0$ there is a local differentiable conjugacy between $f$
and $g$ by Sternberg's Theorem and the extension of Taken's Theorem to the decreasing case.

\section{Conclusion}
\label{sect:conc}
\setcounter{equation}{0}

In this paper we have shown how the reasonable expectation that truncated normal forms provide information that is more than simply topological is realised.
By introducing one or two extra terms in the most simple `normal forms' (Table 1)
we have shown that the resulting maps are typically locally differentiably conjugate to the general maps under consideration.
These additional terms and their coefficients satisfy simple equations which mean that they can be calculated explicitly (at least from a numerical point of view).
This amounts to a differentiable conjugacy on basins of attraction and repulsion,
so the different invariant regions have their own differentiable conjugacies.
Global differentiable conjugacies are unusual because of the multiple conditions
on multipliers that need to hold (see \cite{PGSG2021} for an interesting example),
so the reduction to local conjugacies is natural.

A calculation of the coefficients of the new terms we have introduced in practical problems
should give some sense of how far the map is from the standard truncated normal forms of the literature,
and hence they provide additional information about how close the bifurcation behaves to that of the standard form.
A similar analysis is possible in the continuous time case, and we will report on this separately \cite{GS2022}.

The differentiably conjugate normal forms we consider are not unique.
We have chosen the standard truncated normal forms to have coefficients which are as simple as possible;
we could have chosen coefficients that meant they were as close as possible to the Taylor series of the general system,
though this adds extra special coefficients to the normal forms.
Equally, there is an element of choice about the additional terms used. 
 
Our belief is that the calculation of these higher order coefficients and their dependency on parameters should become a natural part
of investigating important bifurcations; they give a more nuanced description of the dynamics than topological equivalence used hitherto.

\section*{Acknowledgements}
\setcounter{equation}{0}

The authors were supported by Marsden Fund contract MAU1809,
managed by Royal Society Te Ap\={a}rangi.

\appendix
\section{Sketch proof of Belitskii's Theorem}
\label{app:Bel}
\setcounter{equation}{0}


Sternberg \cite{Sternberg} proves that for every hyperbolic fixed point $x_k$ of $f$ there exists a neighbourhood $U$ of $x_k$ and a neighbourhood $V$ of $0$ such that $f$ on $U$ is differentiably conjugate to the linear map $\lambda x$ on $V$ where $\lambda =f^\prime (x_k)\ne 1$. Belitskii \cite{Belitskii1986} uses a slight generalization of the push-forward argument of the previous theorem to extend this to a diffeomorphism on the whole of $U_k = (x_{k-1},x_{k+1})$.

Fix $k$ and suppose $f(x)-x>0$ on $(x_k,x_{k+1})$, so $x_k$ is unstable and orbits are strictly increasing in $(x_k,x_{k+1})$. Suppose $a\in U\cap (x_k,x_{k+1})$, so $f^{-1}(a)\in U\cap (x_k,x_{k+1})$ but $f(a)$ may not be in $U\cap (x_k,x_{k+1})$. By the definition of $U$, $h$ is continuously differentiable at $a$ and we will extend $h$ to $(a,f(a))$ by defining 
\[
h(x)= \left( g\circ h\circ f^{-1} \right)\!(x), \quad x\in (a,f(a)).
\] 
Thus for $x\in (a,f(a))$
\begin{equation}\label{eq:longexp}
h^\prime (x)= \frac{g^\prime (h(f^{-1}(x))) h^\prime (f^{-1}(x))}{f^\prime (f^{-1}(x))},
\end{equation}
and 
\begin{equation}\label{eq:limdown} \lim_{x\downarrow a} h^\prime (x)= \frac{g^\prime (h(f^{-1}(a))h^\prime (f^{-1}(a))}{f^\prime (f^{-1}(a))}.
\end{equation}
But in $(f^{-1}(a),a)$, $h\circ f=g\circ h$ and $h$ is differentiable as it is in $U$, so
\[
 h^\prime (f(x))f^\prime (x)=g^\prime (h(x))h^\prime (x),
\]
and so to evaluate the limit of $h^\prime (a) $ from below we consider the limit of this equation with $x\to f^{-1}(a)$ from below, giving the same expression as the right hand side of (\ref{eq:limdown}). Hence $h$ is differentiable at $a$ and is differentiable by construction on $(a,f(a))$. Thus the neighbourhood on which $h$ is a diffeomorphism can be extended out to the open interval with upper limit $f(a)$. The same argument on $(f(a),f^2(a))$ shows that $h$ can be extended as a diffeomporphism to an open interval with upper bound $f^2(a)$, and then by induction to the open interval with upper bound $\lim_{i\to\infty}f^i(a)=x_{k+1}$.

The argument if $f(x)-x<0$ and on the interval $(x_{k-1},x_k)$ is analogous.  
\newline\rightline{$\square$}

Note that in general the differentiable conjugacy cannot be extended beyond $U_k$. This is because the convergence rates of iterates of $\lambda x$ (or its inverse) do not generally match the convergence rates of other fixed points of the map when these exist. Belitskii and others (see \cite{Ofarrell2009}) have developed invariants which determine whether the conjugacies can be extended to include more fixed points, but since these conditions are not generic we will not pursue this possibility. Theorem~\ref{thm:difffp} does not deal with the case in which there are no fixed points. 

\section{Calculations for the Transcritical Bifurcation}
\label{app:trans}
\setcounter{equation}{0}

Here we derive formulas for the fixed points of $f$ and their multipliers
in the transcritical bifurcation case.
This is done by directly manipulating power series.
In the next section we illustrate how the calculations can instead be done by implicit differentiation.

We write the map as $f(x,\mu) = x + x H(x,\mu)$ where
\begin{equation}
H(x,\mu) = c_1 x + c_2 \mu + c_3 x^2 + c_4 \mu x + c_5 \mu^2 + O \!\left( (|x|+|\mu|)^3 \right),
\label{eq:trH}
\end{equation}
with $c_1 < 0$ and $c_2 > 0$. In terms of the derivatives of $f$,
\[
c_1=\frac{1}{2}f_{xx}(0,0), \quad c_2=f_{x\mu}(0,0), \quad c_3=\frac{1}{6}f_{xxx}(0,0),
\]
and so on. Fixed points are $x=0$ and $x(\mu)$ solving $H(x(\mu),\mu) = 0$.
Since $H$ is $C^{r-1}$ and $c_1 \ne 0$, the Implicit Function Theorem guarantees a unique
local $C^{r-1}$ solution
\begin{equation}
x(\mu) = -\frac{c_2}{c_1} \,\mu + \left( -\frac{c_2^2 c_3}{c_1^3} + \frac{c_2 c_4}{c_1^{2}} - \frac{c_5}{c_1} \right) \mu^2
+ O(\mu^3),
\label{eq:trxStar}
\end{equation}
where the coefficients are obtained by matching terms in a power series.
In terms of $f$ the coefficients are, 
\[
-\frac{c_2}{c_1}=-\frac{2f_{x\mu}}{f_{xx}} \bigg|_{(0,0)}
\]
and
\[
-\frac{c_2^2 c_3}{c_1^3} + \frac{c_2 c_4}{c_1^{2}} - \frac{c_5}{c_1}
= -\frac{1}{3f_{xx}^3}\left(4f_{xxx}f_{x\mu}^2-6f_{xx\mu}f_{x\mu}f_{xx}+3f_{x\mu\mu}f_{xx}^2\right) \Big|_{(0,0)}.
\]
The derivative of the map is
\begin{align}
f_x(x,\mu) &= 1 + H(x,\mu) + x H_x(x,\mu) \nonumber \\
&= 1 + 2 c_1 x + c_2 \mu + 3 c_3 x^2 + 2 c_4 \mu x + c_5 \mu^2 + O \!\left( (|x|+|\mu|)^3 \right).
\end{align}
We evaluate this at the fixed points to obtain
\begin{align}
f_x(0,\mu) &= 1 + c_2 \mu + c_5 \mu^2 + O(\mu^3), \\
f_x(x(\mu),\mu) &= 1 - c_2 \mu + \left( \frac{c_2^2 c_3}{c_1^2} - c_5 \right) \mu^2 + O(\mu^3).
\end{align}
Once again we can write these in terms of $f$ and its derivatives:
\begin{align}
f_x(0,\mu) &= 1 + f_{x\mu}(0,0) \mu + \frac{1}{2}f_{x\mu\mu}(0,0) \mu^2 + O(\mu^3), \\
f_x(x(\mu),\mu) &= 1 - f_{x\mu}(0,0) \mu
+ \left( \frac{2f_{x\mu}^2 f_{xxx}}{3f_{xx}^2} - \frac{1}{2}f_{x\mu\mu} \middle) \right|_{(0,0)} \mu^2
+ O(\mu^3).
\end{align}

\section{Transcritical Bifurcation: implicit differentiation}
\label{app:imp}
\setcounter{equation}{0}

Since many textbooks use implict differentiation to determine coefficients of
expansions in bifurcation problems, for comparison here we repeat the first steps of the analysis
of the transcritical bifurcation using this method.

Assume there exists a solution $x(\mu)$ to $f(x,\mu) - x=0$.
By differentiating this equation with respect to $\mu$ we obtain
\[
f_xx^\prime+f_\mu -x^\prime =0.\]
At the origin $f_x(0,0)=1$ and $f_\mu(0,0)=0$ so this equation is automatically satisfied.
Differentiating again gives
\[
f_{xx}x^{\prime \,2}+2f_{x\mu}x^\prime +f_xx^{\prime\prime}+f_{\mu\mu}-x^{\prime\prime}=0,
\]
and evaluating at $\mu =0$ (giving $f_{\mu\mu}(0,0)=0$ because the origin is constrained to be a fixed point),
either
\[
x^\prime(0)=0 \quad \textrm{or} \quad x^\prime (0)=-\frac{2f_{x\mu}}{f_{xx}} \bigg|_{(0,0)}.
\]
The first possibility is the value for the trivial fixed point,
the second describes $x^\prime (0)$ for the nontrivial fixed point
and matches the coefficient of $\mu$ given in the previous section.
Finally, differentiating again gives
\[
f_{xxx}x^{\prime \,3}
+ 3f_{xx\mu}x^{\prime \,2} +3f_{xx}x^\prime x^{\prime\prime}+3f_{x\mu\mu}x^\prime +3f_{x\mu}x^{\prime\prime}+f_{\mu\mu\mu}+f_xx^{\prime\prime\prime}-x^{\prime\prime\prime}=0.
\]
By substituting the expression for $x^\prime (0)$ into this equation and noting that $f_{\mu\mu\mu}(0,0)=0$
we recover the second coefficient given above.

Calculations of the multipliers can be achieved in the same way. 

\section{Calculations for the Saddle-node Bifurcation}
\label{app:sn}
\setcounter{equation}{0}

We write the map as $f(x,\mu) = x + H(x,\mu)$ where
\begin{equation}
H(x,\mu) = c_2 \mu + c_3 x^2 + c_4 \mu x + c_5 \mu^2 + c_6 x^3 + \cdots, 
\label{eq:snH}
\end{equation}
with $c_2 = f_\mu(0,0) > 0$ and $c_3=\frac{1}{2}f_{xx}(0,0) < 0$.
To find fixed points of $f$ we solve $H(x,\mu) = 0$.
Since $c_2 \ne 0$ the Implicit Function Theorem could immediately be
used to solve $H=0$ for $\mu$.
But we wish to solve for $x$, and this requires a little more work.

We first assume $\mu \ge 0$ and write $\mu = m^2$.
Then write $x = m z$ and define
\begin{equation}
G(z,m) = \begin{cases}
\frac{H(m z,m^2)}{m^2}, & m \ne 0, \\
\frac{1}{2} \frac{\partial^2}{\partial m^2} H(m z, m^2), & m = 0.
\end{cases}
\nonumber
\end{equation}
Since $H$ is $C^r$, this function is $C^{r-2}$ and using \eqref{eq:snH} we obtain
\begin{equation}
G(z,m) = c_2 + c_3 z^2 + c_4 z m + c_6 z^3 m + O(m^2).
\label{eq:snG}
\end{equation}
Since $c_3 \ne 0$ the Implicit Function Theorem can be applied to obtain $z = z(m)$ solving $G(z,m) = 0$
provided we choose $z(0) = z_0$ such that $G(z_0,0) = 0$.
There are two choices, $z_0 = \pm \sqrt{\frac{-c_2}{c_3}}$, and with either $z(m)$ is $C^{r-2}$.

By multiplying each $z(m)$ by $m$ we obtain the desired fixed points, call them $x_1(m)$ and $x_2(m)$.
These are $C^{r-1}$ and by using \eqref{eq:snG} and matching terms of power series we readily arrive at
\begin{equation}
x_k(m) = (-1)^k \sqrt{\frac{-c_2}{c_3}} \,m + \frac{c_2 c_6 - c_3 c_4}{2 c_3^2} \,m^2 + O(m^3),
\label{eq:snfp}
\end{equation}
for $k \in \{1,2\}$.
By substituting $c_4 = f_{\mu x}(0,0)$ and $c_6 = \frac{1}{6} f_{xxx}(0,0)$ (also $c_2$ and $c_3$ given above)
we obtain the expression for $x_k(m)$ given in the main article.

By \eqref{eq:snH}, the derivative of the map is
\begin{align}
f_x(x,m^2)
= 1 + 2 c_3 x + c_4 m^2 + 3 c_6 x^2 + O \!\left( (|x|+|m|)^3 \right),
\end{align}
and by substituting \eqref{eq:snfp}
\begin{equation}\label{eq:start}
f_x(x_k(m),m^2) = 1 + 2 (-1)^{k+1} \sqrt{-c_2 c_3} \,m - \frac{2 c_2 c_6}{c_3} \,m^2 + O(m^3).
\end{equation}

\section{Calculations for the Pitchfork Bifurcation}
\label{app:pf}
\setcounter{equation}{0}

The map is $f(x,\mu) = x + x K(x,\mu)$ and we write
\begin{equation}
K(x,\mu) = c_2 \mu + c_3 x^2 + c_4 \mu x + c_5 \mu^2 + c_6 x^3 + c_7 \mu x^2 + c_{8} x^4 + \cdots,
\label{eq:pfK}
\end{equation}
with $c_2=f_{x\mu}(0,0) > 0$ and $c_3 =\frac{1}{6}f_{xxx}(0,0) < 0$.
In (\ref{eq:pfK}) we have included only the terms that are fourth order or lower in $x$ and $m$, where $\mu = m^2$.

The trivial fixed point is $x = 0$;
the non-trivial fixed points, valid for small $\mu > 0$, are $x_k(m)$, $k \in \{1,2\}$, satisfying $K(x_k(m),m^2) = 0$.
This last equation is identical to that in the saddle-node case,
so by importing \eqref{eq:snfp} we have
\begin{equation}
x_k(m) = (-1)^k \sqrt{\frac{-c_2}{c_3}} \,m + \frac{c_2 c_6 - c_3 c_4}{2 c_3^2} \,m^2 + (-1)^k L m^3 + O(m^4),
\label{eq:pffp}
\end{equation}
for some $L \in \mathbb{R}$.
For the pitchfork case we unfortunately need a formula for $L$,
so substitute \eqref{eq:pffp} into $K(x_k(m),m^2)$ to obtain
\begin{align}
K(x_k(m),m^2) &= \Bigg[ 2 (-1)^k c_3 \sqrt{\frac{-c_2}{c_3}} L + \frac{(c_2 c_6 - c_3 c_4)^2}{4 c_3^3}
+ \frac{c_2 c_4 c_6 - c_3 c_4^2}{2 c_3^2} \nonumber \\
&\quad+ c_5
- \frac{3 c_2 c_6 (c_2 c_6 - c_3 c_4)}{2 c_3^3} - \frac{c_2 c_7}{c_3} + \frac{c_2^2 c_{8}}{c_3^2} \Bigg] m^4 + O(m^5).
\end{align}
We then set the $m^4$ coefficient to zero to obtain
\begin{equation}
L = \frac{1}{2 \sqrt{-c_2 c_3}} \left[
-\frac{5 c_2^2 c_6^2}{4 c_3^2} + \frac{3 c_2 c_4 c_6}{2 c_3^2} - \frac{c_4^2}{4 c_3} + c_5 - \frac{c_2 c_7}{c_3} + \frac{c_2^2 c_{8}}{c_3^2} \right].
\label{eq:pfell}
\nonumber
\end{equation}

We now evaluate the multipliers of the fixed points.
The derivative of the map is
\begin{align}
f_x(x,m^2) &= 1 + K(x,m^2) + x K_x(x,m^2) \nonumber \\
&= 1 + c_2 m^2 + 3 c_3 x^2 + 2 c_4 m^2 x + c_5 m^4 + 4 c_6 x^3 + 3 c_7 m^2 x^2 + 5 c_{8} x^4 \nonumber \\
&\quad+ O \!\left( (|x|+|m|)^5 \right). \nonumber
\end{align}
We evaluate this at the fixed points to obtain
(after simplification involving substituting in the above formula for $L$):
\begin{align}
f_x(0,m^2) &= 1 + c_2 m^2 + c_5 m^4 + O(m^5), \\
f_x(x_k(m),m^2) &= 1 - 2 c_2 m^2 +(-1)^k B m^3 + C m^4 + O(m^5),
\label{eq:pfmult}
\end{align}
where
\begin{align}
B &= \sqrt{\frac{-c_2}{c_3^3}} (c_2 c_6 + c_3 c_4), \label{eq:eqB} \\
C &= -\frac{3 c_2^2 c_6^2}{2 c_3^3} + \frac{c_2 c_4 c_6}{c_3^2} + \frac{c_4^2}{2 c_3} - 2 c_5 + \frac{2 c_2^2 c_{10}}{c_3^2}. \label{eq:eqC}
\end{align}
These are easily rewritten in terms of the derivatives of $f$ at $(x,\mu) = (0,0)$ by using \eqref{eq:pfK}.

\section{Calculations for the Period-doubling Bifurcation}
\label{app:pd}
\setcounter{equation}{0}

The map is $f(x,\mu) = -x + x P(x,\mu)$ and we write
\begin{equation}
P(x,\mu) = b_1 x + b_2 \mu + b_3 x^2 + b_4 \mu x + b_5 \mu^2 + b_6 x^3 + b_7 \mu x^2 + b_{8} x^4 + \cdots,
\label{eq:pdH}
\end{equation}
where, as in the pitchfork case, we have included only the terms that are fourth order or lower in $x$ and $m$, where $\mu = m^2$.
The trivial fixed point is $x = 0$ with multiplier
\begin{equation}
f_x(0,m^2) = -1 + b_2 m^2 + b_5 m^4 + O(m^5).
\label{eq:pfmult0}
\end{equation}
By direct calculations we obtain $f^2(x,\mu) = x + x G(x,\mu)$ where
\begin{equation}
G(x,\mu) = c_2 \mu + c_3 x^2 + c_4 \mu x + c_5 \mu^2 + c_6 x^3 + c_7 \mu x^2 + c_{8} x^4 + \cdots,
\label{eq:pdG}
\end{equation}
with
\begin{equation}
\begin{split}
c_2 &= -2 b_2 = -2f_{x\mu} > 0, \\
c_3 &= -2(b_1^2 + b_3) = -\frac{1}{6}(3f_{xx}^2+2f_{xxx}) < 0, \\
c_4 &= -b_1 b_2  = -\frac{1}{2}f_{xx}f_{x\mu}\,, \\
c_5 &= b_2^2 - 2 b_5 = f_{x\mu}^2+f_{x\mu\mu})\,, \\
c_6 &= b_1 (b_1^2 + b_3)= \frac{1}{24}f_{xx}(3f_{xx}^2+2f_{xxx}), \\
c_7 &= -2 b_7 - 4 b_1 b_4 + 2 b_1^2 b_2 + 4 b_2 b_3 \\ 
&=\frac{1}{12}(3f_{xx}f_{xx\mu}+6f_{xx}^2f_{x\mu}+8f_{x\mu}f_{xxx}-8f_{xxx\mu})\,, \\
c_{8} &= -2 b_{10} - 6 b_1 b_6 - b_1^2 b_3 + 3 b_3^2 \\
& =-\frac{1}{60}f_{xxxxx}-\frac{1}{8}f_{xx}f_{xxxx}+\frac{1}{12}f_{xxx}^2-\frac{1}{24}f_{xx}^2f_{xxx}\,,
\end{split}
\label{eq:pdbAll}
\end{equation}
where all derivatives of $f$ are evaluated at $(x,\mu) = (0,0)$.
The non-trivial fixed points of $f^2$ are given by \eqref{eq:pffp} with \eqref{eq:pfell}.
They have the same multiplier, call it $D(m)$, given by \eqref{eq:pfmult}.
By substituting \eqref{eq:pdbAll} into \eqref{eq:pfmult} we obtain
\begin{align}
D(m) = 1 + 4 b_2 m^2 + M m^4 + O(m^5),
\end{align}
where the $m^3$ term has vanished because $c_2 c_6 + c_3 c_4 = 0$, and
\begin{equation}
M = 4 b_5 - \frac{b_2^2}{(b_1^2 + b_3)^2} \left( b_1^4 + 5 b_1^2 b_3 - 4 b_3^2 + 12 b_1 b_6 + 4 b_{8} \right).
\label{eq:pdp}
\end{equation}

The normal form $g$ has $b_2 = -1$, $b_3 = 1$, and $b_{8} = a$.
By substituting these into the above formulas
we find that the trivial fixed point of $g$ has multiplier $-1 - \nu$
and the non-trivial fixed points of $g^2$ have multiplier $d(a,n) = 1 - 4 n^2 + 4(1-a) n^4 + O(n^5)$
(where $\nu = n^2$).
By matching the multipliers of the trivial fixed points we obtain
\begin{equation}
\nu = -b_2 \mu - b_5\mu^2 + O(\mu^3).
\end{equation}
By using this and equating the multipliers of the non-trivial fixed points we obtain
\[
0 = \left( 4(1-a) b_2^2 + 4 b_5 - M \right) m^4 + O(m^5).
\]
In order for the $m^4$ term to vanish we must have
\begin{align}
a &= 1 + \frac{4 b_5 - M}{4 b_2^2} \nonumber \\
&= \frac{1}{(b_1^2+b_3)^2}\left( \frac{5}{4} b_1^4 + \frac{13}{4} b_1^2 b_3 + 3 b_1 b_6 + b_{8} \right) + O(m) \nonumber \\
&= \frac{1}{\left( 3 f_{xx}^2 + 2 f_{xxx} \right)^2}
\left( \frac{45}{4} f_{xx}^4 + \frac{39}{2} f_{xx}^2 f_{xxx} + 9 f_{xx} f_{xxxx} + \frac{6}{5} f_{xxxxx} \middle) \right|_{(0,0)}. \nonumber
\end{align}

\end{document}